\documentclass[11pt,letterpaper]{amsart}
\usepackage{amssymb}
\usepackage{amsfonts}
\usepackage{eucal}
\usepackage{txfonts}
\usepackage{amsmath}
\usepackage{enumerate}
\usepackage{comment}
\usepackage{hyperref}
\usepackage{mathrsfs}

\vfuzz2pt 

\newtheorem{thm}{Theorem}[section]

\newtheorem{lem}[thm]{Lemma}

\theoremstyle{definition}

\theoremstyle{remark}
\newtheorem{rem}[thm]{Remark}
\numberwithin{equation}{section}

\makeatletter
\@namedef{subjclassname@2020}{\textup{2020} Mathematics Subject Classification}
\makeatother

\begin{document}
	\title[]
	{ The  power series expansions of logarithmic Sobolev, $\mathcal{W}$-functionals  and   scalar curvature rigidity 
	}
	
	\author{Liang Cheng}

	
	\subjclass[2020]{Primary 53C24; Secondary 	53E20 .}

	\keywords{  scalar curvature; rigidity theorems; isoperimetric profile;log-sobolev inequalities;
		Perelman's $\boldsymbol{\mu}$-functional}
	
	\thanks{Liang Cheng's  Research partially supported by
		Natural Science Foundation of China 12171180
	}
	
	\address{School of Mathematics and Statistics, and Key Laboratory of Nonlinear Analysis $\&$ Applications (Ministry of Education), Central  China Normal University, Wuhan, 430079, P.R.China}
	
	\email{chengliang@ccnu.edu.cn }

	\begin{abstract}
	In this paper, we obtain that the logarithmic Sobolev and $\mathcal{W}$-functionals admit remarkable power series expansions when appropriate test functions are selected.  Using these expansions formulas,
	we prove that for an open subset $V$ in an $n$-dimensional manifold $M$ with $\bar{V}\subset M$ satisfying:
	\begin{enumerate}
		\item [(a)] The scalar curvature of $V$ satisfies the lower bound:
		\begin{equation*}
			\operatorname{Sc}(x) \geq n(n-1)K \quad \text{for all } x \in V,
		\end{equation*}
		\item [(b)] The isoperimetric profile of $V$ is no less than that of space form $M^n_K$:
		\begin{equation*}
			\operatorname{I}(V,\beta) := \inf_{\substack{\Omega\subset V \\ \mathrm{Vol}(\Omega)=\beta}} \mathrm{Area}(\partial \Omega) \geq \operatorname{I}(M^n_K,\beta) \quad \text{for some } \beta_0>0 \text{ and all } 0<\beta<\beta_0,
		\end{equation*}
	\end{enumerate}
	\textbf{then} the sectional curvature of $V$ must satisfy
	$$
	\operatorname{Sec}(x) = K \quad \text{for all } x \in V.
	$$
	Additionally, we derive  some new scalar curvature rigidity theorems concerning
 logarithmic Sobolev inequality and
Perelman's $\boldsymbol{\mu}$-functional.
	\end{abstract}
	\maketitle	
	
	\section{Introduction}
	The celebrated Bishop-Gromov volume comparison theorem states that 
		\emph{if $(M, g)$ is a Riemannian manifold, $p \in M$, and the geodesic ball $B(p, r_0)$ is compactly contained in $M^n$ (i.e., $B(p, r_0) \Subset M$) with Ricci curvature satisfying 
		\begin{equation*}
			\operatorname{Rc} \ge (n-1) K 	\text{ on } B(p,r_0),
		\end{equation*}	
 then the function
			$$
			r \mapsto \frac{\operatorname{Vol}\bigl(B(p, r)\bigr)}{\operatorname{Vol}\bigl(B^K(p_K, r)\bigr)}
			$$
			is non-increasing in $r \in (0, r_0]$, where $B^K(p_K, r)$ is a geodesic ball of radius $r$ in the space form $M_K^n$ of constant curvature $K$. In particular,
			$
			\operatorname{Vol}\bigl(B(p, r_0)\bigr) \leq \operatorname{Vol}\bigl(B^K(p_K, r_0)\bigr)
			$
			with equality holding if and only if $B(p, r_0)$ is isometric to $B^K(p_K, r_0)$.}
			
			The Bishop-Gromov theorem implies the following rigidity result:
If $B(p,r_0)\Subset M$ satisfies
\begin{equation}\label{Ric_lower_bound}
	\operatorname{Rc} \ge (n-1) K 	\text{ on } B(p,r_0)
\end{equation}
and
\begin{equation}\label{volume}
	\operatorname{Vol}(B(p,r_0))\ge \operatorname{Vol}_K(B^K(p_K,r_0)),
\end{equation}
\textbf{then} 
$B(p,r_0)$ is isometric to $B^K(p_K,r_0)$. 
In this paper, we establish a scalar curvature analogue of this rigidity result.
As our first main result, we prove that when the Ricci curvature condition (\ref{Ric_lower_bound}) is replaced with a lower bound $n(n-1)K$ on the scalar curvature, and the volume requirement in (\ref{volume}) is strengthened by requiring that the isoperimetric profile is no less than that of the space form $M^n_K$, the rigidity conclusion remains valid.

	\begin{thm}\label{rigidity_iso_profile}
	Let $(M^n,g)$ be an $n$-dimensional Riemannian manifold, and let $V$ be an open subset with $\overline{V}\subset M$.
Suppose that  the following two conditions hold: 
 
 \noindent (a) The scalar curvature of $V$ satisfies
 	\begin{equation}\label{scalar_curvature_lowerbound}
 	\operatorname{Sc}(x) \geq n(n-1)K \quad \text{for all } x \in V,
 	\end{equation}	
 \noindent (b)  There exists $\beta_0 > 0$ such that the isoperimetric profile of $V$ satisfies
 	\begin{equation}\label{comparison_iso}
 		 \operatorname{I}(V, \beta) \coloneqq \inf_{\substack{\Omega \subset V \\ \mathrm{Vol}(\Omega) = \beta}} \mathrm{Area}(\partial \Omega) \geq \operatorname{I}(M^n_K, \beta) \quad \text{for all } \beta < \beta_0,
 	\end{equation}
 	where  $M^n_K$ is the  space form of constant sectional curvature $K$.\\
	Then the sectional curvature of $V$ satisfies 
	$$\operatorname{Sec}(x)=K  \text{ for all } x\in V.$$ 
\end{thm}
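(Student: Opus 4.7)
The plan is to deploy the paper's announced power-series expansions of the logarithmic Sobolev and $\mathcal{W}$ functionals as a local-to-global bridge. The key principle is that a Gaussian-type bubble concentrated at an interior point $p \in V$, evaluated under the $\mathcal{W}$-functional with scale parameter $\tau > 0$, produces an asymptotic expansion in $\tau$ whose coefficients are pointwise polynomial invariants of the curvature at $p$. Combined with a functional inequality derived from hypothesis (b), this should force pointwise curvature rigidity.

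First, I would convert hypothesis (b) into a $\mathcal{W}$-type inequality on $V$. For a test function $f_{p,\tau}$ concentrated in a small geodesic ball about $p$, the super-level sets of $e^{-f_{p,\tau}}$ have volume $\to 0$ as $\tau \to 0^+$, so only the small-$\beta$ portion of the isoperimetric profile is needed. Via the standard coarea/rearrangement machinery (Bobkov--Ledoux, Ros), the inequality $\operatorname{I}(V,\beta) \geq \operatorname{I}(M^n_K, \beta)$ for $\beta < \beta_0$ lifts to
\[\mathcal{W}_V\!\bigl(g, f_{p,\tau}, \tau\bigr) \;\geq\; \mathcal{W}_{M^n_K}\!\bigl(g_K, f^\star_{p_K,\tau}, \tau\bigr),\]
where $f^\star_{p_K,\tau}$ is the symmetric-decreasing rearrangement of $f_{p,\tau}$ on $M^n_K$. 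Choosing $f_{p,\tau}$ to be the $\exp_p$-pullback of a near-minimizer of $\mathcal{W}_{M^n_K}$ supplies a clean baseline for comparison.

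Next, I would feed this test function into the paper's power-series expansion. The output should take the schematic form
\[\mathcal{W}_V\!\bigl(g, f_{p,\tau}, \tau\bigr) - \mathcal{W}_{M^n_K}\!\bigl(g_K, f^\star_{p_K,\tau}, \tau\bigr) \;=\; a_1 \tau \bigl(\operatorname{Sc}(p) - n(n-1)K\bigr) + a_2 \tau^2 \, Q(p) + O(\tau^3),\]
where $Q(p)$ measures the deviation at $p$ of the full curvature tensor from the constant-sectional-curvature-$K$ model. Hypothesis (a) makes the linear coefficient nonnegative, while the inequality of Step 1, together with letting $\tau \to 0^+$, successively forces $\operatorname{Sc}(p) = n(n-1)K$ and then $Q(p) \leq 0$. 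If $Q$ has the expected definite sum-of-squares structure, $Q(p) = 0$, which yields $\operatorname{Sec}(p) = K$; since $p \in V$ is arbitrary, the theorem follows.

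The main obstacle is the algebraic step of identifying $Q(p)$ as a definite sum-of-squares quadratic in the full curvature deviation, rather than in merely scalar or Ricci traces --- without such definiteness, one would extract only Ricci-type rigidity. This demands careful combinatorial bookkeeping in the power-series expansion, carried out in normal coordinates at $p$ and using the precise form of the near-minimizer $f^\star_{p_K,\tau}$ on $M^n_K$. A secondary subtlety is ensuring that the isoperimetric-to-$\mathcal{W}$ reduction in Step 1 is sharp to order $\tau^2$, so that the coefficient comparison in Step 2 is not swamped by slack in the rearrangement inequality.
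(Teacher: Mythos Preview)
Your architecture --- isoperimetric comparison $\Rightarrow$ functional inequality via Schwarz rearrangement $\Rightarrow$ small-$\tau$ expansion $\Rightarrow$ coefficient comparison --- is exactly the paper's route, but two concrete points in your execution would fail. First, Schwarz symmetrisation controls $\int|\nabla u|^2$, $\int u^2$, and $\int u^2\log u^2$, but does nothing with $t\int_V\operatorname{Sc}\cdot u^2$; the rearrangement therefore delivers a comparison of the \emph{logarithmic Sobolev} functional $\mathcal{L}$, not of $\mathcal{W}$. This matters: for the relevant test functions the paper computes $\mathcal{W}(V,g,u,\tau)=-\tfrac{1}{6}|\operatorname{Rm}|^2(p)\,\tau^2+o(\tau^2)$ with \emph{no} $O(\tau)$ term, so the coefficient $a_1\tau\bigl(\operatorname{Sc}(p)-n(n-1)K\bigr)$ you expect is identically zero in the $\mathcal{W}$-difference. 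The linear term $-\operatorname{Sc}(p)\tau$ lives in the expansion of $\mathcal{L}$; the paper reads off $\operatorname{Sc}(p)\le n(n-1)K$ from the $\mathcal{L}$-comparison, which together with (a) forces $\operatorname{Sc}\equiv n(n-1)K$ on $V$ and hence $\Delta\operatorname{Sc}\equiv0$ --- a necessary input, since $\Delta\operatorname{Sc}(p)$ appears in the $\tau^2$-coefficient of $\mathcal{L}$.

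Second, the ``main obstacle'' you flag is the crux of the paper, and your proposed test function does not resolve it. Pulling back a near-minimiser from $M^n_K$ via $\exp_p$ gives the quadratic correction $a_{ij}=\tfrac{(n-1)K}{3}\delta_{ij}$ coming from the \emph{model} Ricci tensor; by the paper's general expansion (Theorem~\ref{expansion_L_a}) this leaves the indefinite term $-4\bigl|a-\tfrac{1}{3}\operatorname{Rc}(p)\bigr|^2=-\tfrac{4}{9}\bigl|\operatorname{Rc}(p)-(n-1)Kg\bigr|^2$ in the $\tau^2$-coefficient, and $Q$ is not a sum of squares. The paper's device is the class $\mathcal{B}_p(V)$: one takes $u=(4\pi\tau)^{-n/4}e^{-d(p,\cdot)^2/8\tau}\eta$ with $\eta^2=1+\tfrac{1}{3}R_{ij}(p)x^ix^j+\cdots$, i.e.\ the correction uses the Ricci tensor of $(M,g)$ at $p$, not of the model space. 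This exact choice kills the bad term and leaves the clean $-\tfrac{1}{6}|\operatorname{Rm}|^2(p)\tau^2$; comparison with the model value $-\tfrac{1}{6}|\operatorname{Rm}_K|^2\tau^2=-\tfrac{1}{3}n(n-1)K^2\tau^2$, together with the orthogonal curvature decomposition and the already-established $\operatorname{Sc}(p)=n(n-1)K$, then forces $\operatorname{Sec}(p)=K$. (Your ``secondary subtlety'' --- verifying that the rearranged $\bar u$ again lies in $\mathcal{B}^\alpha_{p_K}(M^n_K)$ --- is real and is handled in the paper by analysing the ODE \eqref{key_6} under rescaling.)
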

\begin{rem}
Actually, the conclusion of Theorem \ref{rigidity_iso_profile} still holds if we replace the condition \eqref{scalar_curvature_lowerbound}
with the weaker condition
\begin{equation}\label{integral_scalar_curvature_lowerbound}
	\fint_{\Omega} \operatorname{Sc} \, d\mu :=\frac{\int_{\Omega} \operatorname{Sc} \, d\mu}{\mathrm{Vol}(\Omega)} \ge n(n-1)K \quad \text{for any } \Omega \Subset V,
\end{equation}
while keeping the condition \eqref{comparison_iso} unchanged. This is because 
the condition \eqref{comparison_iso} implies that  (see Theorem \ref{key_key})
$$
\operatorname{Sc}(x) \le n(n-1)K \quad \text{for all } x \in V.
$$
Hence, \eqref{integral_scalar_curvature_lowerbound} and \eqref{comparison_iso} together imply 
$$
\operatorname{Sc}(x) \equiv n(n-1)K \quad \text{on } V.
$$
\end{rem}

The rigidity properties of Riemannian manifolds with lower scalar curvature bounds are an important subject of intensive study.  
Two fundamental approaches to establishing rigidity theorems for scalar curvature are the minimal surface technique and Dirac operator methods. 
Notable results obtained through these approaches include:
the Positive Mass Theorem by Schoen and Yau \cite{STY,STY4} and Witten \cite{W},
the non-existence of positive scalar curvature metrics on tori $\mathbb{T}^n$ by Schoen and Yau  \cite{STY2}, and Gromov and Lawson \cite{GL2},
 Llarull's rigidity theorem \cite{L2}, also see e.g. \cite{BM}, \cite{GL}, \cite{MinOo}, \cite{ST}, etc.	For a comprehensive overview of rigidity results obtained through these approaches, we refer to the survey \cite{Brendle} and lectures \cite{Gromovlecture}, along with additional references therein.

Regarding  rigidity properties related  scalar curvature and isoperimetric constant,  we first note that 
when $K = 0$ and $V = M$ is a complete Riemannian manifold with bounded sectional curvature, Theorem~\ref{rigidity_iso_profile} follows as a simple consequence of the Ricci flow and Perelman's $\mathcal{W}$-functional.
Indeed, using the monotonicity of Perelman's $\mathcal{W}$-functional, Bing Wang~\cite{w1} proved that if $M^n$ is a complete Riemannian manifold with bounded sectional curvature, then for any $T > 0$,
\begin{equation}\label{bingwwang_1}
	\nu(M,g,T)  \leq 0,
\end{equation}
with equality holding if and only if $M^n$ is isometric to the Euclidean space $\mathbb{R}^n$ (cf.~Proposition~4.9 in~\cite{w1}).
Additionally, note that $\mathrm{Sc} \geq 0$ on $M^n$, together with the isoperimetric constant
$$
\operatorname{I}(M) := \inf_{\Omega \subset M} \frac{\mathrm{Area}(\partial \Omega)}{\mathrm{Vol}(\Omega)^{\frac{n-1}{n}}} \geq \operatorname{I}(\mathbb{R}^n),
$$
implies $\nu(M,g,T) \geq 0$ (cf.~Lemma~3.5 in~\cite{w1}). Consequently, this implies Theorem \ref{rigidity_iso_profile} for the case $K = 0$ and $V = M$ is a complete Riemannian manifold with bounded sectional curvature.
Here,  $\nu(M,g,T)$ is defined as follows: For any open subset $\Omega \subset M$,  recall that Perelman's $\mathcal{W}$-functional is defined as (cf. \cite{P1})
\begin{equation}\label{Perelman_W_o}
	\mathscr{W}(\Omega, g, f, t) := \int_{\Omega} \left\{ t\left(\operatorname{Sc} \cdot f^2 + 4|\nabla f|^2\right) - f^2 \log f^2 \right\} d\mu -n - \frac{n}{2} \log (4 \pi t) ,
\end{equation}
and  
$$
	\nu(\Omega,g,T) := \inf_{t \in (0,T]} \inf_{f \in \mathcal{S}(\Omega)} \mathscr{W}(\Omega, g, f, t) ,
$$
where
$$
\mathcal{S}(\Omega) := \left\{f \mid f \in W_0^{1,2}(\Omega), \, f > 0, \, \int_{\Omega} f^2 \, d\mu = 1\right\}.
$$
When $V$ is an open subset of $M^n$,  $\boldsymbol{\nu}(V, g, t)$ does not have exact monotonicity
under the Ricci flow (cf.~Theorem~5.2 in~\cite{w1}). However, using local estimates obtained from the backward heat kernel of the Ricci flow, the author proved in previous work~\cite{Cheng} that Theorem~\ref{rigidity_iso_profile} holds for the case $K=0$.

Different from the aforementioned methods, in this paper we employ the power series expansions of logarithmic Sobolev and $\mathcal{W}$-functionals to prove Theorem \ref{rigidity_iso_profile}.
Precisely, we consider the following logarithmic Sobolev functional:
\begin{equation}\label{Log_sobole_func}
	\begin{aligned}
		\mathcal{L}(V,g, u,t) = &4t \int_V |\nabla u|^2\,d\mu - \int_V u^2 \log u^2\,d\mu \\
		&+ \int_V u^2\,d\mu \log\left(\int_V u^2\,d\mu\right) - \left(n + \frac{n}{2}\log(4\pi t)\right)\int_V u^2\,d\mu,
	\end{aligned}
\end{equation}
and Perelman's $\mathcal{W}$-functional:
\begin{equation}\label{Perelman_W}
	\mathcal{W}(V,g, u,t) = \mathcal{L}(V,g, u,t) + t\int_V \operatorname{Sc} \cdot u^2\,d\mu,
\end{equation}
where $\operatorname{Sc}$ is the scalar curvature of the Riemannian metric $g$ on $V$.
The logarithmic Sobolev inequality for $\mathbb{R}^n$ states that for all $f \in W_0^{1,2}(\mathbb{R}^n)$ with $\int_{\mathbb{R}^n} f^2\,d\mu = 1$,
\begin{equation*}
	\int_{\mathbb{R}^n} f^2 \log f^2\,d\mu \leq \frac{n}{2} \log\left(\frac{2}{n\pi e} \int_{\mathbb{R}^n}|\nabla f|^2\,d\mu\right),
\end{equation*}
which is equivalent to:
\begin{equation}\label{log_R_n}
	\int_{\mathbb{R}^n} f^2 \log f^2\,d\mu \leq 4t\int_{\mathbb{R}^n} |\nabla f|^2\,d\mu - n - \frac{n}{2}\log(4\pi t),
\end{equation}
for all $t > 0$ (cf.  \cite{Gross} and Lemma 8.1.7 in \cite{Topping}). Notice that by taking $f = \frac{u}{\left(\int_{\mathbb{R}^n} u^2\,d\mu\right)^{1/2}}$, \eqref{log_R_n} is equivalent to $\mathcal{L}(\mathbb{R}^n,g_{\mathbb{R}^n}, u,t) \geq 0$ for all $t>0$ and $u \in W_0^{1,2}(\mathbb{R}^n)$.
The $\mathcal{W}$-functional was introduced by Perelman \cite{P1} in his seminal work on the Ricci flow, playing a pivotal role in his celebrated proofs of Thurston's geometrization conjecture and the Poincar\'e conjecture, as well as in subsequent studies of the Ricci flow. The $\mathcal{W}$-functional defined in \eqref{Perelman_W}, under the normalization condition $\int_V u^2\,d\mu = 1$, coincides precisely with the original formulation \eqref{Perelman_W_o} used by Perelman in \cite{P1}.

To obtain the most useful power series expansions of the logarithmic Sobolev and $\mathcal{W}$-functionals, we first need select appropriate test functions. Let $(M^n,g)$ be an $n$-dimensional manifold
and $p\in V\subset \mathring{M^n}$, where $V$ is a neighborhood of $p$.
As an initial choice, one may consider the test function
$(4\pi t)^{-\frac{n}{4}}e^{-\frac{d(p,x)^2}{8t}}$ since it achieves the equality of logarithmic Sobolev inequality on Euclidean space. In fact, we can compute  ( see the case $a=0$ and $\alpha=0$ in Theorem \ref{expansion_L_a} ) that
\begin{equation}\label{b_1}
	\begin{aligned}
		&\quad\mathcal{L}(V,g,(4\pi t)^{-\frac{n}{4}}e^{-\frac{d(p,x)^2}{8t}},t)\\
		&=
		-\operatorname{Sc}(p)t-\left(\Delta \operatorname{Sc}(p)-\frac{1}{3}\operatorname{Sc}^2(p)+\frac{1}{6}|\operatorname{Rm}|^2(p)-\frac{4}{9}|\operatorname{Rc}|^2(p)\right)t^2+o(t^2),
	\end{aligned}
\end{equation}
and
\begin{equation}\label{b_2}
	\begin{aligned}
		\quad\mathcal{W}(V,g,(4\pi t)^{-\frac{n}{4}}e^{-\frac{d(p,x)^2}{8t}},t)=
		-
		\left(\frac{1}{6}|\operatorname{Rm}|^2(p)-\frac{4}{9}|\operatorname{Rc}|^2(p)\right)t^2+o(t^2),
	\end{aligned}
\end{equation}
where $\operatorname{Rc}$ and $\operatorname{Rm}$ denote the Ricci tensor and curvature tensor of $g$.
Due to the problematic term $-\frac{4}{9}|\operatorname{Rc}|^2(p)$ appearing in both (\ref{b_1}) and (\ref{b_2}), these expansion formulas are difficult to utilize for obtaining the desired rigidity results. 
We remark that the similar problem arises in studying the following volume conjecture for geodesic balls proposed by A.Gray
and L.Vanhecke \cite{GV}:
\textit{Suppose that all sufficiently small geodesic spheres of any point in $M^n$ have the same volume growth as in a Euclidean space, i.e.
	\begin{equation}\label{euclidean_volume}
		\mathrm{Vol}\left(B(p,r)\right)= \omega_n r^n
	\end{equation}	
	for $r\le r_p$ and all $p\in M$, where $\omega_n= \frac{\pi ^{\frac{n}{2}}}{\left(\frac{1}{2} n\right) !}$. Then $M^n$ is locally flat.}	
When $dim (M^n)\le 3$, this conjecture was proved by  A.Gray
and L.Vanhecke \cite{GV} by using of the following power series expansion for $\mathrm{Vol}\left(B(p,r)\right)$(see \cite{GV} or \cite{G}):
\begin{equation}\label{volume_expansion}
	\begin{aligned}
		&\mathrm{Vol}\left(B(p,r)\right)\\
		=& \omega_n r^n\left\{1-\frac{\operatorname{Sc}(p) }{6(n+2)}r^2\right.\\
		&\left.-\frac{1}{20(n+2)(n+4)}\left(\Delta \operatorname{Sc}(p)-\frac{5}{18}\operatorname{Sc}^2(p)+\frac{1}{6}|\operatorname{Rm}|^2(p)-\frac{4}{9}|\operatorname{Rc}|^2(p)\right)r^4+O\left(r^6\right)\right\}.
	\end{aligned}
\end{equation}
Notice that \eqref{euclidean_volume} and \eqref{volume_expansion} together imply that $\operatorname{Sc}\equiv 0$ and
$\frac{1}{6}|\operatorname{Rm}|^2\equiv\frac{4}{9}|\operatorname{Rc}|^2$. Therefore, they can conclude $\operatorname{Rm}\equiv 0$ when $dim (M^n)\le 3$.
When $dim (M^n)\ge 4$, \eqref{volume_expansion} is similarly difficult to utilize for proving the conjecture due to the presence of the problematic term $-\frac{4}{9}|\operatorname{Rc}|^2(p)$ in \eqref{volume_expansion}.

The key observation in this paper is the following: Rather than using the  test function 
$(4\pi t)^{-\frac{n}{4}}e^{-\frac{d(p,x)^2}{8t}}$, we consider a modified version
$$
u=(4\pi t)^{-\frac{n}{4}}e^{-\frac{d(p,x)^2}{8t}}\eta,
$$
where $\eta$ satisfies:
\begin{enumerate}
	\item $p\in\operatorname{supp}(\eta) \subset\subset V$,
	\item $\eta(x,t)^2$ admits the local expansion 
	$$
	\eta(x,t)^2 = \sum_{k=0}^2 \phi_k(x)t^k + o(t^2) \quad \text{around } (p,0),
	$$
with the following regularity conditions at $p$:
 Both fourth derivatives of $\phi_0$ and second derivatives of $\phi_1$ exist at $p$,
	$\phi_2$ is continuous at $p$,

	\item and  $\phi_0$  admits the local expansion
	$$
\phi_0(x) = 1+\frac{1}{3}\operatorname{Rc}(p)_{ij}x^ix^j + o(d^2) \quad \text{around } p,
$$
where $\{x^i\}$ denotes the normal coordinates on $T_pM$.
\end{enumerate}
We denote by $\mathcal{B}_p(V)$ the set of all such test functions:
\begin{equation}\label{B_def}
	\mathcal{B}_p(V) = \biggl\{u(x,t) \biggm| 
	\begin{aligned}
		&u = (4\pi t)^{-\frac{n}{4}}e^{-\frac{d(p,x)^2}{8t}}\eta, \\
		&\text{where } \eta \text{ satisfies conditions (1), (2), and (3)}
	\end{aligned}
	\biggr\},
\end{equation}
Remarkably, by choosing $u\in \mathcal{B}_p(V)$, the problematic terms in (\ref{b_1}) and (\ref{b_2}) vanish completely! This yields the following elegant power series expansion formulas for the logarithmic Sobolev and $\mathcal{W}$-functionals. 
 
\begin{thm}\label{expansion_opt}
	Let $(M^n,g)$ be the $n$-dimensional manifold
	and $p\in V\subset \mathring{M^{n}}$, where $V$ is a neighborhood of $p$.
For   $u(x,t)\in 	\mathcal{B}_p(V)$, we have
\begin{equation}\label{expansion_nu_opt}
	\mathcal{W}(V,g,u,t)=
	-\frac{1}{6}|\operatorname{Rm}|^2(p)t^2+o(t^2),
\end{equation}		
where $\mathcal{B}_p(V)$ is defined in (\ref{B_def}).
For  $u(x,t)\in 	\mathcal{B}^{\alpha}_p(V)$, we have
	\begin{equation}\label{expansion_l_opt}
		\mathcal{L}(V,g,u,t)=
		-\operatorname{Sc}(p)t-\left(\Delta \operatorname{Sc}(p)+\frac{1}{3}\operatorname{Sc}^2(p)+\alpha \operatorname{Sc}(p)+\frac{1}{6}|\operatorname{Rm}|^2(p)\right)t^2+o(t^2),
	\end{equation}
	where
	\begin{equation*}
 	\mathcal{B}^{\alpha}_p(V)
=\mathcal{B}_p(V)\cap \{u\mid u=(4\pi t)^{-\frac{n}{4}}e^{-\frac{d(p,x)^2}{8t}}\eta, \frac{\partial}{\partial t}\eta^2(p,0)=\alpha \}.
	\end{equation*}	 
In  particular, if we choose $u(x,t)\in 	\mathcal{B}_p(V)\cap \{u\mid \int_V u^2 d\mu =1\}$, then
$\alpha=-\frac{1}{3}\operatorname{Sc}(p)$ and
\begin{equation}\label{expansion_n_opt}
	\mathcal{L}(V,g,u,t)=
	-\operatorname{Sc}(p)t-\left(\Delta \operatorname{Sc}(p)+\frac{1}{6}|\operatorname{Rm}|^2(p)\right)t^2+o(t^2).
\end{equation}
\end{thm}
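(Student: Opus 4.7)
The plan is to derive (\ref{expansion_nu_opt})--(\ref{expansion_n_opt}) as a direct specialization of the more general expansion formula established in Theorem \ref{expansion_L_a}. That theorem computes $\mathcal{L}(V,g,u,t)$ for test functions $u = (4\pi t)^{-n/4} e^{-d(p,x)^2/(8t)} \eta$ with $\eta^2 = \phi_0 + \phi_1 t + \phi_2 t^2 + o(t^2)$ and $\phi_0(x) = 1 + a\, \operatorname{Rc}(p)_{ij} x^i x^j + o(d^2)$, parametrized by the coefficient $a$ and by $\alpha := \phi_1(p)$. From formula (\ref{b_1}) (the case $a = \alpha = 0$) one reads off that the coefficient of $|\operatorname{Rc}|^2(p)\, t^2$ in this general expansion is a polynomial in $a$ whose value at $a = 0$ is $-\frac{4}{9}$; the key algebraic fact I would verify is that this polynomial vanishes exactly at $a = \frac{1}{3}$, the value prescribed by the definition of $\mathcal{B}_p(V)$.

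The verification is carried out in normal coordinates at $p$, where $r := d(p,x) = |x|$. Writing $H := (4\pi t)^{-n/4} e^{-r^2/(8t)}$, the main tools are the volume-element expansion $\sqrt{|g|}(x) = 1 - \frac{1}{6} R_{ij}(p) x^i x^j + O(|x|^3)$, the splitting
\begin{equation*}
\log u^2 = -\frac{n}{2}\log(4\pi t) - \frac{r^2}{4t} + \log \eta^2,
\end{equation*}
the gradient identity $4t|\nabla u|^2 = \frac{r^2}{4t} u^2 - 2r\, H^2 \eta \langle \nabla r, \nabla \eta\rangle + 4t\, H^2 |\nabla \eta|^2$, and the Gaussian moment identities
\begin{equation*}
\int H^2\, x^i x^j \, dx = 2t\, \delta^{ij}, \qquad \int H^2\, x^i x^j x^k x^l \, dx = 4 t^2 \bigl(\delta^{ij}\delta^{kl} + \delta^{ik}\delta^{jl} + \delta^{il}\delta^{jk}\bigr).
\end{equation*}
Assembling the constituent integrals of $\mathcal{L}$ as in Theorem \ref{expansion_L_a} and then setting $a = \frac{1}{3}$ yields (\ref{expansion_l_opt}) after the $|\operatorname{Rc}|^2$ cancellation.

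The expansion (\ref{expansion_nu_opt}) for $\mathcal{W}$ then follows by adding the expansion of $t\int_V \operatorname{Sc}\, u^2\, d\mu$ to (\ref{expansion_l_opt}). Taylor-expanding $\operatorname{Sc}(x) = \operatorname{Sc}(p) + \nabla_i \operatorname{Sc}(p) x^i + \frac{1}{2}\nabla^2_{ij} \operatorname{Sc}(p) x^i x^j + O(|x|^3)$ and applying the moment identities gives
\begin{equation*}
t\int_V \operatorname{Sc}\, u^2 \, d\mu = \operatorname{Sc}(p)\, t + \Bigl(\Delta \operatorname{Sc}(p) + \frac{1}{3}\operatorname{Sc}^2(p) + \alpha \operatorname{Sc}(p)\Bigr) t^2 + o(t^2),
\end{equation*}
which cancels every term in (\ref{expansion_l_opt}) except $-\frac{1}{6}|\operatorname{Rm}|^2(p)\, t^2$; in particular the $\alpha$-dependence drops out, so the expansion of $\mathcal{W}$ is valid on all of $\mathcal{B}_p(V)$ and not only on the slice $\mathcal{B}^\alpha_p(V)$. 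Finally, for the normalized case the same Gaussian bookkeeping gives $\int_V H^2 \phi_0 \, d\mu = 1 + \frac{1}{3}\operatorname{Sc}(p)\, t + O(t^2)$ and $\int_V H^2 \phi_1 \, d\mu = \phi_1(p) + O(t)$, so
\begin{equation*}
\int_V u^2 \, d\mu = 1 + \Bigl(\frac{1}{3}\operatorname{Sc}(p) + \phi_1(p)\Bigr)\, t + O(t^2);
\end{equation*}
imposing $\int_V u^2\, d\mu = 1$ forces $\alpha = -\frac{1}{3}\operatorname{Sc}(p)$, and substitution into (\ref{expansion_l_opt}) yields (\ref{expansion_n_opt}). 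The main technical obstacle is the Gaussian-moment bookkeeping to order $t^2$, which requires retaining cubic and quartic Taylor corrections in $\sqrt{|g|}$, $\phi_0$, and $\operatorname{Sc}$; once the algebra of Theorem \ref{expansion_L_a} is in hand, the passage to Theorem \ref{expansion_opt} reduces to the specialization $a = \frac{1}{3}$ together with the two cancellations noted above.
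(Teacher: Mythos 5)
Your proposal is correct and takes essentially the same route as the paper: both proofs read Theorem \ref{expansion_opt} as a direct specialization of Theorem \ref{expansion_L_a} by setting $a_{ij}=\tfrac{1}{3}\operatorname{Rc}_{ij}(p)$ (so that the term $-4\left|a-\tfrac{1}{3}\operatorname{Rc}(p)\right|^2$ vanishes and $2\operatorname{tr}(a)\operatorname{Sc}(p)=\tfrac{2}{3}\operatorname{Sc}^2(p)$), then obtain $\mathcal{W}$ by adding the expansion of $t\int_V\operatorname{Sc}\,u^2\,d\mu$, and read off $\alpha=-\tfrac{1}{3}\operatorname{Sc}(p)$ from the $O(t)$ coefficient of $\int_V u^2\,d\mu$. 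The only minor imprecision is that Theorem \ref{expansion_L_a} allows an arbitrary symmetric tensor $a_{ij}$ rather than a scalar multiple of $\operatorname{Rc}(p)$, but since the definition of $\mathcal{B}_p(V)$ already forces $a_{ij}=\tfrac{1}{3}\operatorname{Rc}_{ij}(p)$ this does not affect the argument.
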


\begin{rem}
	Let $\operatorname{K}(p, T; x,t)$ be the backward heat kernel of the conjugate heat equation for the Ricci flow $g(t)$ on $M^n$, centered at $(p,T)$, i.e.,
	\begin{equation}\label{con_heat_kernel}
		\begin{cases}
			\left(-\partial_t - \Delta_{g(t)} + \operatorname{Sc}(g(t))\right) \operatorname{K}(p, T; x,t) = 0, \\
			\lim\limits_{t \nearrow T} \operatorname{K}(p, T; x,t) = \delta_{p}.
		\end{cases}
	\end{equation}
	The heat kernel $u$ has the power series expansion 
	$$
	\operatorname{K}(p, T; x,0) = (4\pi T)^{-\frac{n}{2}} e^{-\frac{d_{g(0)}(p,x)^2}{4T}} \sum\limits^N_{k=0} \psi_k T^k + O(T^{N+1-\frac{n}{2}}),
	$$
	where 
	$$
	\psi_0 = 1 + \frac{1}{3}R_{ij}(g(0))(p)x_0^ix_0^j + \frac{3}{24}\nabla_k R_{ij}(g(0))(p)x_0^ix_0^jx_0^k + O(d_{g(0)}^4(p,x))
	$$
	(see Lemma 24.16 in \cite{RFV3}). Here, $\{x_0^k\}_{k=1}^n$ are the normal geodesic coordinates  on $T_pM$ with respect to the metric $g(0)$, and $R_{ij}$ is the Ricci curvature. 	
	Clearly, this implies that  $\operatorname{K}(x,t)^{\frac{1}{2}}h(x):=\operatorname{K}(p, t; x,0)^{\frac{1}{2}}h(x)$ is exactly contained in $\mathcal{B}_p(V)$, where $h$ is a smooth non-negative cut-off function such that $\operatorname{supp}\{h\} \subset V$ and $h \equiv 1$ in a neighborhood of $p$. 
\end{rem}

Therefore, we can utilize the power series expansion formulas from Theorem~\ref{expansion_opt} to study rigidity theorems. 
Let $V_p$ be an arbitrary neighborhood of $p$, with test functions $u(x,t) \in \mathcal{B}^{\alpha}_p(V_p)$ and $\bar{u}(x,t) \in \mathcal{B}^{\alpha}_{p_K}(M^n_K)$ for some point $p_K \in M^n_K$, where $M^n_K$ denotes the space form of constant sectional curvature $K$. We can prove (see Lemma~\ref{main_comparison}) that if
\begin{equation}\label{comparison_l_intro}
	\mathcal{L}(V_p,g, u,t) \geq \mathcal{L}(M^n_K,g_K, \bar{u},t) - o(t^2),
\end{equation}
for all $t \leq \tau_0$ and some $\tau_0 > 0$, then the scalar curvature satisfies $\operatorname{Sc}(p) \leq n(n-1)K$.
Furthermore, under the additional assumptions that $\operatorname{Sc}(p) \geq n(n-1)K$ and $\Delta \operatorname{Sc}(p) \geq 0$, we obtain $\operatorname{Sec}(p) = K$. The proofs of Theorem~\ref{rigidity_iso_profile} then follow by combining these results with the Schwarz symmetrization method.

Another purpose of this paper is to study the rigidity properties associated with the logarithmic Sobolev inequality and Perelman's $\boldsymbol{\mu}$-functional. 
Recall that Perelman's $\boldsymbol{\mu}$-functional is defined as follows (see \cite{P1}): for an open subset $V \subset M^n$,  
\begin{equation}  
	\boldsymbol{\mu}(V, g, t) := \inf\limits_{f \in \mathcal{S}(V)} \mathcal{W}(V, g, f, t),  
\end{equation}  
where $\mathcal{S}(V) := \left\{f \mid f \in W_0^{1,2}(V), f > 0, \int_{V} f^2 d\mu = 1\right\}$.  
In \cite{w1} and \cite{w2}, Bing Wang studied the properties and applications of the $\boldsymbol{\mu}$-functional for the case where  
the functional was considered on an open subset of a Riemannian manifold.   
As an application to Theorem \ref{expansion_opt}, we have the following result:

\begin{thm}\label{mu_rigidity_extension}
	Let $(M^n,g)$ be an $n$-dimensional manifold and $V$ an open subset of $M^n$.
	
	\noindent (i) If there exist $\tau_0>0$, $\gamma<\frac{1}{6}$ and $Q\ge 0$ such that for all $f \in W_0^{1,2}(V)$ with $\int_{V} f^2 d\mathrm{vol}=1$ and all $t\leq\tau_0$,
	\begin{equation}\label{mu_rigidity_extension_i}
		\gamma t^2\int_{V}|\operatorname{Rm}|^2f^2d\mu + \int_{V}\left\{t\left(\operatorname{Sc}\cdot f^2+4|\nabla f|^2\right)- f^2 \log f^2 \right\} d\mu - n - \frac{n}{2} \log (4 \pi t) \geq -Qt^2 - o(t^2),
	\end{equation}
	then 
	$$
	|\operatorname{Rm}|^2(x) \leq \frac{Q}{\frac{1}{6}-\gamma} \quad \text{for all } x \in V,
	$$
	and consequently $V$ must be flat if \eqref{mu_rigidity_extension_i} holds with $Q= 0$.
	
		\noindent (ii)
	In particular, by taking $\gamma=0$ in Theorem \ref{mu_rigidity_extension} (i), we have that if there exist $\tau_0>0$ and $Q\ge 0$ such that for all $t\leq\tau_0$,
	\begin{equation}\label{mu_rigidity_extension_ii}
		\boldsymbol{\mu}(V, g, t) \geq -Qt^2 - o(t^2),
	\end{equation}
	then 
	$$
	|\operatorname{Rm}|^2(x) \leq 6Q \quad \text{for all } x \in V,
	$$
	and consequently $V$ must be flat when \eqref{mu_rigidity_extension_ii} holds with $Q=0$.
\end{thm}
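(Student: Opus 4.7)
The plan is to apply the expansion formula \eqref{expansion_nu_opt} of Theorem~\ref{expansion_opt} to a localized test function at each point of $V$, and then read off a pointwise curvature bound by sending $t \to 0^+$ in the hypothesis. Part~(ii) will follow immediately from Part~(i) with $\gamma = 0$, since the lower bound \eqref{mu_rigidity_extension_ii} on $\boldsymbol{\mu}(V,g,t)$, combined with $\boldsymbol{\mu}(V,g,t) \le \mathcal{W}(V,g,u,t)$, yields exactly the $\gamma = 0$ version of \eqref{mu_rigidity_extension_i} when evaluated on the positive normalized test function I will construct.

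Fix an arbitrary $p \in V$ and a relatively compact neighborhood $V_p \Subset V$ of $p$. Choose $u = u_t \in \mathcal{B}_p(V_p)$ with $\int_{V_p} u^2 \, d\mu = 1$; the existence of such a normalized element is clear from the explicit Gaussian form $u = (4\pi t)^{-n/4} e^{-d(p,x)^2/(8t)}\eta$ and the flexibility in choosing $\eta$ (concretely, the remark after Theorem~\ref{expansion_opt} shows that a truncated conjugate heat kernel, suitably normalized, lies in this class). Extending $u$ by zero yields an admissible test function in $W_0^{1,2}(V)$ with $\int_V u^2 \, d\mu = 1$, so it may be substituted into \eqref{mu_rigidity_extension_i}. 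Noting that under $\int u^2 \, d\mu = 1$ the expression on the left of \eqref{mu_rigidity_extension_i} equals $\gamma t^2 \int_V |\operatorname{Rm}|^2 u^2 \, d\mu + \mathcal{W}(V,g,u,t)$ (by \eqref{Log_sobole_func} and \eqref{Perelman_W}), the hypothesis becomes
\begin{equation*}
\gamma t^2 \int_V |\operatorname{Rm}|^2 u^2 \, d\mu + \mathcal{W}(V,g,u,t) \ge -Q t^2 - o(t^2).
\end{equation*}

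Now formula \eqref{expansion_nu_opt} gives $\mathcal{W}(V,g,u,t) = -\tfrac{1}{6}|\operatorname{Rm}|^2(p) t^2 + o(t^2)$, while the density $u^2 = (4\pi t)^{-n/2} e^{-d(p,x)^2/(4t)}\eta^2$ is an approximate Gaussian of variance of order $t$ concentrated at $p$, so a standard Laplace/concentration estimate, using continuity of $|\operatorname{Rm}|^2$ near $p$, gives $\int_V |\operatorname{Rm}|^2 u^2 \, d\mu = |\operatorname{Rm}|^2(p) + o(1)$ as $t \to 0^+$. Substituting, dividing by $t^2 > 0$, and letting $t \to 0^+$ yields $(\gamma - \tfrac{1}{6})|\operatorname{Rm}|^2(p) \ge -Q$, which, since $\tfrac{1}{6} - \gamma > 0$, rearranges to the desired bound $|\operatorname{Rm}|^2(p) \le Q/(\tfrac{1}{6} - \gamma)$. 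As $p$ is arbitrary, the bound holds throughout $V$, and $Q = 0$ forces $\operatorname{Rm} \equiv 0$, proving flatness. The only technical obstacle I anticipate is rigorously producing a normalized element of $\mathcal{B}_p(V_p)$ together with the concentration asymptotics $\int_V |\operatorname{Rm}|^2 u^2 \, d\mu \to |\operatorname{Rm}|^2(p)$; both are handled cleanly by the explicit Gaussian structure embedded in the definition of $\mathcal{B}_p$, so the real work has already been carried out in Theorem~\ref{expansion_opt}.
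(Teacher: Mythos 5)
Your proposal is correct and takes essentially the same route as the paper: apply the hypothesis to a test function drawn from $\mathcal{B}_p(V)$ (extended by zero), invoke the expansion $\mathcal{W}(V,g,u,t) = -\tfrac{1}{6}|\operatorname{Rm}|^2(p)t^2 + o(t^2)$ from Theorem~\ref{expansion_opt} together with the Gaussian concentration $\int_V |\operatorname{Rm}|^2 u^2 \, d\mu = |\operatorname{Rm}|^2(p) + o(1)$, divide by $t^2$, and let $t\to 0^+$; this is exactly the content of the paper's Lemma~\ref{curvature_estimate}. The only cosmetic difference is that you normalize $u$ in advance so that $\int_V u^2\,d\mu = 1$, while the paper leaves $u$ unnormalized, plugs $f = u/\|u\|_2$ into the hypothesis, and then multiplies through by $\int_V u^2\,d\mu = 1+O(t)$ to absorb the normalization into the $o(t^2)$ error; both are minor packaging of the same argument.
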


It was originally proved by Bakry, Concordet and Ledoux \cite{BCL}, and later by Ni \cite{Ni2} using a different method, that if an $n$-dimensional complete Riemannian manifold $(M^n,g)$ with non-negative Ricci curvature satisfies the $L^2$-logarithmic Sobolev inequality with the optimal constant for $\mathbb{R}^n$, then $(M^n,g)$ must be isometric to $\mathbb{R}^n$. 
In previous work \cite{Cheng}, by using the Ricci flow the author improved upon the rigidity results of Bakry-Concordet-Ledoux \cite{BCL} and Ni \cite{Ni2}, proving that if  some open subset $V\subset M$ satisfies
$$
\int_V \operatorname{Sc} \, d\mu \geq 0,
$$
and if $V$ has the logarithmic Sobolev inequality with the optimal constant as $\mathbb{R}^n$ at small scales, i.e. for all $f \in W_0^{1,2}(V)$ with $\int_{V} f^2 \, d\mathrm{vol} = 1$ and all $t \leq \tau_0$ (for some $\tau_0 > 0$),
\begin{equation}\label{log_sobolev}
	\int_{V} f^2 \log f^2 \, d\mathrm{vol} \leq \int_{V} 4t |\nabla f|^2 \, d\mathrm{vol} - n - \frac{n}{2} \log (4 \pi t),
\end{equation}
 then $V$ must be flat.
As an application of  Theorem \ref{mu_rigidity_extension}, we obtain the following improved rigidity theorem concerning the logarithmic Sobolev inequality:

\begin{thm}\label{mu_rigidity}
	Let $(M^n,g)$ be an $n$-dimensional manifold. 	
 If some open subset $V\subset M$   satisfies
	\begin{equation}\label{scalar_assump}
	\int_V \operatorname{Sc} d\mu \ge 0,
	\end{equation}	
	and if the logarithmic Sobolev inequality on $V$ only differs from that of Euclidean case with $o(t^2)$, i.e.  there exist $\tau_0>0$ such that 	for all $f \in W_0^{1,2}(V)$, $\int_{V} f^2 d vol=1$ and $t\le\tau_0$,
	\begin{equation}\label{log_sobolev}
		\int_{V} f^2 \log f^2 d vol \le \int_{V}4t |\nabla f|^2 d vol-n-\frac{n}{2} \log (4 \pi t)+o(t^2),
	\end{equation}
 \textbf{then} $V$ must be flat.

\end{thm}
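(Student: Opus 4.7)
The plan is to convert the global log-Sobolev hypothesis \eqref{log_sobolev} into pointwise curvature information by testing against a concentrated family of functions from $\mathcal{B}_p(V)$ and invoking the power series expansion \eqref{expansion_n_opt}. First I fix an arbitrary point $p \in V$ and, for each small $t > 0$, produce a test function $u_t \in \mathcal{B}_p(V)$ normalized so that $\int_V u_t^2 \, d\mu = 1$. An explicit such family is supplied by the remark following Theorem \ref{expansion_opt}: take $u_t = c_t \, \operatorname{K}(p, t; \cdot, 0)^{1/2} h$, where $h$ is a smooth cutoff with $\operatorname{supp}(h) \Subset V$ and $h \equiv 1$ near $p$, and $c_t \to 1$ is a normalizing constant. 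Each $u_t$ lies in $W_0^{1,2}(V)$, so the hypothesis \eqref{log_sobolev} applies and, in the notation of the functional \eqref{Log_sobole_func}, translates to $\mathcal{L}(V, g, u_t, t) \geq -o(t^2)$.

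Next, the power series expansion \eqref{expansion_n_opt} of Theorem \ref{expansion_opt} gives
\begin{equation*}
\mathcal{L}(V, g, u_t, t) = -\operatorname{Sc}(p) \, t - \left(\Delta \operatorname{Sc}(p) + \tfrac{1}{6}|\operatorname{Rm}|^2(p)\right) t^2 + o(t^2).
\end{equation*}
Combining these two displays, dividing by $t$, and letting $t \to 0^+$ yields the pointwise bound $\operatorname{Sc}(p) \leq 0$; since $p$ was arbitrary, $\operatorname{Sc} \leq 0$ on all of $V$. The scalar curvature hypothesis \eqref{scalar_assump} then forces $\operatorname{Sc} \equiv 0$ on $V$, and hence $\Delta \operatorname{Sc} \equiv 0$ as well. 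With these vanishings substituted back, the combined inequality reduces to $-\tfrac{1}{6}|\operatorname{Rm}|^2(p) \, t^2 \geq -o(t^2)$; dividing by $t^2$ and sending $t \to 0^+$ gives $|\operatorname{Rm}|(p) = 0$, and as $p$ is arbitrary, $V$ is flat.

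Since Theorem \ref{expansion_opt} is already available, the argument above is essentially coefficient comparison, with no deep remaining obstacle. The real difficulty is entirely absorbed into the proof of the expansion formula \eqref{expansion_n_opt}: namely, showing that the modified test family in $\mathcal{B}_p(V)$ kills the $-\tfrac{4}{9}|\operatorname{Rc}|^2(p)$ term that obstructs the naive Gaussian expansion \eqref{b_1}. The only mild technical point within the plan is ensuring that the uniform-in-$f$ error $o(t^2)$ supplied by the hypothesis and the pointwise-in-$t$ error $o(t^2)$ produced by the expansion on the particular family $u_t$ combine coherently, which is routine.
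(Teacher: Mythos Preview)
Your proposal is correct and follows essentially the same route as the paper: the paper packages the first two steps as a separate lemma (Lemma~\ref{section}) that converts \eqref{log_sobolev} into $\mathcal{L}(V,g,u,t)\ge -o(t^2)$ for $u\in\mathcal{B}_p(V)$ and reads off $\operatorname{Sc}(p)\le 0$ and then $\operatorname{Sec}(p)=0$ from the $O(t)$ and $O(t^2)$ coefficients of \eqref{expansion_l_opt}, then combines this with \eqref{scalar_assump} exactly as you do. The only cosmetic difference is that the paper does not normalize the test function upfront but instead divides by $\int_V u^2\,d\mu = 1+O(t)$ to pass between the hypothesis and $\mathcal{L}$, whereas you absorb this into a constant $c_t$; both are equivalent.
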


	The present paper is organized as follows. In section 2, we will give the
	the proof of Theorem \ref{curvature_estimate}. In section 3, we give the proofs of \ref{mu_rigidity_extension} and
	Theorem  \ref{mu_rigidity}. In section 4,  we will give the proof of Theorem \ref{rigidity_iso_profile}.

	\section{ Power series expansion formulas of logarithmic Sobolev and $\mathcal{W}$-functionals }	
	
In this section we calculate the power series expansion formulas of logarithmic Sobolev and $\mathcal{W}$-functionals.
 Theorem \ref{expansion_opt} follows from the following
		theorem  by choosing $a=\frac{1}{3}\operatorname{Rc}(p)$.

\begin{thm}\label{expansion_L_a}
	Let $(M^n,g)$ be an $n$-dimensional Riemannian manifold and $p\in V\subset \mathring{M^{n}}$, where $V$ is a neighborhood of $p$.
Consider test function of the form
	\begin{equation}\label{eta_condition}
u(x,t)=(4\pi t)^{-\frac{n}{4}}e^{-\frac{d(p,x)^2}{8t}}\eta,
	\end{equation}
where $\eta$ satisfies:
\begin{itemize}
	\item $p\in\operatorname{supp}(\eta) \subset\subset V$,
	\item $\eta(x,t)^2$ admits the local expansion 
	$$
	\eta(x,t)^2 = \sum_{k=0}^2 \phi_k(x)t^k + o(t^2) \quad \text{around } (p,0),
	$$
	with the following regularity conditions at $p$:
	Both fourth derivatives of $\phi_0$ and second derivatives of $\phi_1$ exist at $p$,
	$\phi_2$ is continuous at $p$,
	
	\item and  $\phi_0$  admits the local expansion
	$$
	\phi_0(x) = 1+a_{ij}x^ix^j + o(d^2) \quad \text{around } p,
	$$
	where $\{x^i\}$ denotes the normal coordinates on $T_pM$.
\end{itemize}
Then we have
\begin{equation}\label{expansion_L}
	\begin{aligned}
		&\quad\mathcal{L}(V,g, u,t)=
	-\operatorname{Sc}(p)t\\
	&\ \ -\left(\Delta \operatorname{Sc}(p)-\frac{1}{3}\operatorname{Sc}^2(p)+2tr(a)\operatorname{Sc}(p)+\alpha \operatorname{Sc}(p)+\frac{1}{6}|\operatorname{Rm}|^2(p)-4\left|a-\frac{1}{3}\operatorname{Rc}(p)\right|^2\right)t^2+o(t^2),
	\end{aligned}
\end{equation}
where $\alpha=\phi_1(p)$,
and
\begin{equation}\label{expansion_nu}
			\begin{aligned}
		\quad\mathcal{W}(V,g, u,t)=-
	\left(\frac{1}{6}|\operatorname{Rm}|^2(p)-4\left|a-\frac{1}{3}\operatorname{Rc}(p)\right|^2\right)t^2+o(t^2).
\end{aligned}
\end{equation}
\end{thm}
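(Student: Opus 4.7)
The plan is to carry out a Laplace/steepest-descent expansion of each piece of $\mathcal{L}$ and $\mathcal{W}$ as $t \to 0^+$. The Gaussian factor $e^{-d(p,x)^2/(4t)}$ in $u^2$ concentrates on the $\sqrt{t}$-scale near $p$, so I would work in normal coordinates $\{x^i\}$ centered at $p$, use the fact that $d(p,x) = |x|$ there, and rescale $x = \sqrt{t}\, y$ to convert every integral into one against the fixed model Gaussian $(4\pi)^{-n/2}e^{-|y|^2/4}\, dy$ on $T_pM$. The only inputs required are the standard Taylor expansions in normal coordinates, in particular the volume element $\sqrt{\det g}(x) = 1 - \tfrac{1}{6}R_{ij}(p)x^ix^j + O(|x|^3)$ together with its fourth-order coefficient involving $\Delta \operatorname{Sc}$, $\operatorname{Sc}^2$, $|\operatorname{Rm}|^2$, and $|\operatorname{Rc}|^2$, and the prescribed expansion $\eta^2 = \phi_0 + t\phi_1 + t^2\phi_2 + o(t^2)$; the stated regularity of $\phi_0, \phi_1, \phi_2$ at $p$ is precisely what is needed to Taylor-expand to order $t^2$ inside the Gaussian integral.

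A convenient reformulation is obtained by expanding $\log u^2 = -\tfrac{n}{2}\log(4\pi t) - d^2/(4t) + \log \eta^2$ and $|\nabla \log u|^2 = |{-}d\,\nabla d/(4t) + \nabla \log \eta|^2$ and observing that the $\log(4\pi t)$ pieces cancel telescopically, so that
\begin{equation*}
\mathcal{L}(V,g,u,t) = \tfrac{1}{2t}\int d^2 u^2\, d\mu - 2\int d\,\nabla d \cdot \nabla \log \eta\; u^2\, d\mu + 4t\int |\nabla \log \eta|^2 u^2\, d\mu - \int u^2 \log \eta^2\, d\mu + M \log M - nM,
\end{equation*}
where $M := \int u^2\, d\mu$. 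The leading moments $\int |y|^2 = 2n$ and $\int 1 = 1$ against the model Gaussian immediately cancel the $t^0$ terms $n - n = 0$; the $O(t)$ correction in the first integral couples with the volume term $-\tfrac{1}{6}R_{ij}(2\delta^{ij}) = -\tfrac{1}{3}\operatorname{Sc}(p)$ to produce $-\operatorname{Sc}(p)t$, and this same mechanism propagates to supply the $\Delta \operatorname{Sc}$, $\operatorname{Sc}^2$, $\operatorname{tr}(a)\operatorname{Sc}$, and $\alpha \operatorname{Sc}$ coefficients at order $t^2$. The cross term $-2\int d\,\nabla d \cdot \nabla \log\eta\, u^2$ and the quadratic term $4t\int |\nabla \log \eta|^2 u^2$ depend on $\eta$ only through $\nabla \phi_0$, hence only through $a_{ij}$; a straightforward Gaussian-moment computation reduces them to a quadratic form in $a$ whose cross-term with the $-\tfrac{1}{6}R_{ij}x^ix^j$ volume correction assembles exactly into $4|a - \tfrac{1}{3}\operatorname{Rc}(p)|^2$, with a residual $-\tfrac{1}{6}|\operatorname{Rm}|^2(p)t^2$ piece left over from the fourth-order coefficient of $\sqrt{\det g}$.

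The main obstacle is the bookkeeping: one must verify that all $\phi_2(p)$- and higher-derivative-of-$\phi_0$-dependent contributions cancel between $\int u^2 \log \eta^2$ and $M \log M$, so that the only $\eta$-dependent residues in the final formula are $\operatorname{tr}(a)\operatorname{Sc}(p)$, $\alpha \operatorname{Sc}(p)$, and the square $|a - \tfrac{1}{3}\operatorname{Rc}(p)|^2$. A convenient sanity check is that the expansion must recover the earlier formulas \eqref{b_1} and \eqref{b_2} when $\eta \equiv 1$, i.e.\ $a = 0$, $\alpha = 0$, $\phi_2 \equiv 0$, in which case $-4|a - \tfrac{1}{3}\operatorname{Rc}(p)|^2$ collapses precisely to the offending $-\tfrac{4}{9}|\operatorname{Rc}|^2(p)$. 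Once \eqref{expansion_L} is in hand, \eqref{expansion_nu} follows immediately by Taylor-expanding $\operatorname{Sc}$ to second order and adding
\begin{equation*}
t\int_V \operatorname{Sc} \cdot u^2\, d\mu = \operatorname{Sc}(p)t + \bigl(\Delta \operatorname{Sc}(p) - \tfrac{1}{3}\operatorname{Sc}^2(p) + 2\operatorname{tr}(a)\operatorname{Sc}(p) + \alpha \operatorname{Sc}(p)\bigr)t^2 + o(t^2),
\end{equation*}
whose terms cancel against the corresponding ones in \eqref{expansion_L}, leaving only the clean expression $-\bigl(\tfrac{1}{6}|\operatorname{Rm}|^2(p) - 4|a - \tfrac{1}{3}\operatorname{Rc}(p)|^2\bigr)t^2$.
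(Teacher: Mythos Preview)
Your proposal is correct and follows essentially the same route as the paper: both localize to a small ball via Gaussian concentration, pass to normal coordinates, decompose $u^2 = H^2\eta^2$ (you phrase this via $\log u$ and $\nabla\log\eta$, the paper via $H$ and $\nabla\xi$, but the resulting six pieces are identical), and then evaluate each piece by Gaussian moments against the fourth-order expansion of $\sqrt{\det g}$ and the prescribed expansions of $\phi_0,\phi_1,\phi_2$. The paper carries out exactly the ``bookkeeping'' you anticipate---showing that the $\beta$, $\operatorname{tr}(d)$, $E(b)$, and $(\operatorname{tr} a)^2$ contributions cancel between the $M\log M$ term and the others---and arrives at the same completion-of-squares $-4\operatorname{tr}(a^2)+\tfrac{8}{3}\langle a,\operatorname{Rc}\rangle-\tfrac{4}{9}|\operatorname{Rc}|^2=-4|a-\tfrac{1}{3}\operatorname{Rc}|^2$ you predicted.
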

	\begin{proof}
We take $r_0$ sufficient small such that $B(p, r_0)\subset V$ and $r_0<\operatorname{inj}(p)$.	
We first claim that 
\begin{equation}\label{asymptotic_est}
	|\mathcal{L}(B(p,r_0),g, u,t)-\mathcal{L}(V,g,u,t)|\le  C e^{-\frac{Dr^2_0}{t}},
\end{equation}
 when $t$ is sufficient small, where $C$ and $D$ are positive constants depending on $\max |\eta|$, $\max |\nabla\eta|$ and $\min\limits_{x\in \operatorname{supp}\{\eta\}} Ric(x)$. 
We calculate that
\begin{equation}\label{2.1_1}
	\begin{aligned}
		&\left|\mathcal{L}(V,g,u,t)-\mathcal{L}(B(p,r_0),g, u,t)\right|\\
		\le &4t \int_{{V\backslash B(p,r_0)}}| \nabla u|^2d\mu+\int_{{V\backslash B(p,r_0)}} u^2 \log u^2d\mu +|n+\frac{n}{2}\log{4\pi t}|\int_{{V\backslash B(p,r_0)}} u^2 d\mu\\
		&+\left|\int_V u^2 d\mu\cdot \log{\int_V u^2d\mu}-\int_{B(p,r_0)} u^2 d\mu\cdot \log{\int_{B(p,r_0)} u^2d\mu}\right|\\
		\le & \int_{{V\backslash B(p,r_0)}}\frac{3d^2}{4t}u^2d\mu+8t \int_{{V\backslash B(p,r_0)}}|\nabla \eta |^2 (4\pi t)^{-\frac{n}{2}}e^{-\frac{d(p,x)^2}{4t}}d\mu +\left(n+n|\log{4\pi t}|\right)\int_{{V\backslash B(p,r_0)}} u^2 d\mu\\
		&+ \int_{{V\backslash B(p,r_0)}}| \eta^2\log \eta^2 | (4\pi t)^{-\frac{n}{2}}e^{-\frac{d(p,x)^2}{4t}}d\mu+\left|\int_V u^2 d\mu\cdot \log{\int_V u^2d\mu}-\int_{B(p,r_0)} u^2 d\mu\cdot \log{\int_{B(p,r_0)} u^2d\mu}\right|\\
		\le & \int_{{V\backslash B(p,r_0)}}\frac{3d^2}{4t}u^2d\mu+C' \int_{{V\backslash B(p,r_0)}} (4\pi t)^{-\frac{n}{2}}e^{-\frac{d(p,x)^2}{4t}}d\mu +\left(n+n|\log{4\pi t}|\right)\int_{{V\backslash B(p,r_0)}} u^2 d\mu\\
	&+\left|\int_V u^2 d\mu\cdot \log{\int_V u^2d\mu}-\int_{B(p,r_0)} u^2 d\mu\cdot \log{\int_{B(p,r_0)} u^2d\mu}\right|,
	\end{aligned}
\end{equation}	
when $t\le 1$, where $C'$ is positive constant depending on $\max |\eta|$ and $\max |\nabla\eta|$. 
We also have
\begin{equation}\label{2.1_2}
	\begin{aligned}
		&\left|\int_V u^2 d\mu\cdot \log{\int_V u^2d\mu}-\int_{B(p,r_0)} u^2 d\mu\cdot \log{\int_{B(p,r_0)} u^2d\mu}\right|\\
		\le&\left|\int_{{V\backslash B(p,r_0)}}u^2d\mu\cdot \log{\int_{B(p,r_0)} u^2d\mu}\right|
		+\left|\int_V u^2 d\mu\cdot \log{\left(1+\frac{\int_{{V\backslash B(p,r_0)}} u^2d\mu}{\int_{ B(p,r_0)} u^2d\mu}\right)}\right|\\
		\le&  \left(\int_{{V\backslash B(p,r_0)}}u^2d\mu\right)\left( \log{\int_{B(p,r_0)} u^2d\mu}+\frac{\int_{V} u^2d\mu}{\int_{ B(p,r_0)} u^2d\mu}\right),
	\end{aligned}
\end{equation}	
where we used $\log{\left(1+y\right)}\le y$ for $y\ge 0$ in the last inequality. Noted that
$d\mu(x)\le \frac{\sinh{(\sqrt{|L|}d)}}{\sqrt{|L|}}$ on $\operatorname{supp}\{\eta\}$, where $L=\min\limits_{x\in \operatorname{supp}\{\eta\}} Ric(x)$, and we have $u^2\le ct^{-\frac{n}{2}}e^{-\frac{Dd^2}{t}}$ and $\frac{1}{2}\le \int_{ B(p,r_0)} u^2d\mu\le 2$ when $t$ is sufficient small.  Combining with these and (\ref{2.1_1})(\ref{2.1_2}), we can conclude that
(\ref{asymptotic_est}) holds.

 Let $\Sigma_p\subset T_pM$ be the segment domain equipped with the pulled back metric $\tilde{g}=\operatorname{exp^*_p} g$ such that $\operatorname{exp_p}$ is injective on  $\Sigma_p$. We let  $\tilde{u}=u\circ \operatorname{exp_p}$ on $\Sigma_p$ and
extend 
$\tilde{u}$  on $T_pM\backslash \Sigma_p$ such that $\tilde{u}\equiv 0$ on $T_pM\backslash \Sigma_p$. Moreover, let
$d\tilde{\mu}\equiv 0$ on $T_pM\backslash \Sigma_p$.
Take $B(o,r_0)\subset T_pM$, by (\ref{asymptotic_est}) we know that
\begin{equation}\label{asymptotic_est_2}
	|\mathcal{L}(B(o,r_0),\tilde{g}, \tilde{u},t)-\mathcal{L}(V,g,u,t)|\le  c e^{-\frac{Dr^2_0}{t}},
\end{equation}
when $t$ is sufficient small. 

	Denote $\{x^k\}^n_{k=1}$ be the normal geodesic coordinates centered at $p$ on $T_pM$ with respect to metric $\tilde{g}$.	Denote $\tilde{u}^2=H^2\xi^2$ with $H^2=(4\pi t)^{-\frac{n}{2}}e^{-\frac{|x|^2}{4t}}$. Hence $\xi^2=\eta^2\circ \operatorname{exp_p}$ on $\Sigma_p$. By the assumptions, we can write 
$$
\xi^2=\phi_0+\phi_1 t+\phi_2 t^2+o(t^2),
$$
$\phi_0=1+a_{ij}x^ix^j+e_{ijk}x^ix^jx^k+b_{ijkl}x^ix^jx^kx^l+o(d^4)$, $\phi_1 =\alpha+q_ix^i+d_{ij}x^ix^j+o(d^2)$, $\phi_2=\beta+o(1)$.

Now we compute the power series expansion of $\mathcal{L}(B(o,r_0),\tilde{g}, \tilde{u},t)$. Here and below, we will use
notation $\int$ without subscript be the integral on $T_pM$ for simplicity.
We will use the following identities:
\begin{equation}
	\begin{aligned}\label{3.1_e0}
		 \int H^2 \frac{|x|^2}{t} dx^n=2n, 
	\end{aligned}
\end{equation}
for any symmetric $A_{ij}$,
\begin{equation}\label{3.1_e11}
	\begin{aligned}
		& \int H^2   A_{i j} x^i x^j dx^n \\ 
		= & t  \int_0^{\infty}(4 \pi)^{-\frac{n}{2}} e^{-\frac{r^2}{4}} r^{n+1} \int_{s^{n-1}(1)} A_{i j}  y^i y^j dy^{n-1} \\
		= & t(4\pi)^{-\frac{n}{2}} 2^{n+1}\Gamma(\frac{n}{2} +1)\int_{s^{n-1}(1)} A_{i j}  y^i y^j dy^{n-1}\\
		= & 2tr(A) t,
	\end{aligned}
\end{equation}
\begin{equation}\label{3.1_e12}
	\begin{aligned}
		& \int H^2 \frac{|x|^2}{t}  A_{i j} x^i x^j dx^n \\ 
		= & t  \int_0^{\infty}(4 \pi)^{-\frac{n}{2}} e^{-\frac{r^2}{4}} r^{n+3} \int_{s^{n-1}(1)} A_{i j}  y^i y^j dy^{n-1}\\
		= & t(4\pi)^{-\frac{n}{2}} 2^{n+3}\Gamma(\frac{n}{2} +2)\int_{s^{n-1}(1)} A_{i j}  y^i y^j dy^{n-1}\\
		= & 4(n+2)tr(A) t,
	\end{aligned}
\end{equation}
where we have used that for $A_{ij}$ is  
diagonalized
$$
\int_{s^{n-1}(1)}A_{i j}  y^i y^j =\sum_{i=1} A_{i i} \int_{s^{n-1}(1)}  (y^i)^2dy^{n-1}=\frac{\int_{s^{n-1}(1)}  \sum\limits^n_{i=1}(y^i)^2dy^{n-1}}{n}tr(A)=\frac{\pi^{\frac{n}{2}}}{\Gamma(\frac{n}{2} +1)} tr(A).
$$
 Moreover, for any four tensor $\lambda_{ijkl}$, we have
\begin{equation}\label{3.1_e21}
	\begin{aligned}
		& \int H^2   \lambda_{i j k l} x^i x^j x^k x^l dx^n\\ 
		= & t^2  \int_0^{\infty}(4 \pi)^{-\frac{n}{2}} e^{-\frac{r^2}{4}} r^{n+3} \int_{s^{n-1}(1)} \lambda_{i j k} y^i y^j y^{k } y^ldy^{n-1}\\
		= & t^2(4\pi)^{-\frac{n}{2}} 2^{n+3}\Gamma(\frac{n}{2} +2)\int_{s^{n-1}(1)} \lambda_{i j k} y^i y^j y^{k } y^ldy^{n-1}\\
		= & 4E(\lambda) t^2,
	\end{aligned}
\end{equation}
where $$
E(\lambda)\doteq \sum\limits_{i j=1}^n\left(\lambda_{i i j j}+\lambda_{i j i j}+\lambda_{i j j i}\right),$$
and
\begin{equation}\label{3.1_e22}
\begin{aligned}
	 & \int H^2 \frac{|x|^2}{t}  \lambda_{i j k l} x^i x^j x^k x^l dx^n \\ 
	 = & t^2  \int_0^{\infty}(4 \pi)^{-\frac{n}{2}} e^{-\frac{r^2}{4}} r^{n+5} \int_{s^{n-1}(1)} \lambda_{i j k} y^i y^j y^{k } y^ldy^{n-1}\\
	 = & t^2(4\pi)^{-\frac{n}{2}} 2^{n+5}\Gamma(\frac{n}{2} +3)\int_{s^{n-1}(1)} \lambda_{i j k} y^i y^j y^{k } y^ldy^{n-1}\\
	 	 = & 8(n+4)E(\lambda) t^2
	 \end{aligned}
\end{equation}
where we have used
$$
\begin{aligned}
	&\ \ \ \int_{S^{n-1}(1)} \lambda_{i j k l}  y^i y^j y^k y^l dy^{n-1}\\
	& =\frac{\pi^{\frac{n}{2}}}{(n+2)\Gamma(\frac{n}{2} +1)}\left\{3 \sum_{i=1}^n \lambda_{i i i i}+\sum_{i \neq j}\left(\lambda_{i i j j}+\lambda_{i j i j}+\lambda_{i j j i}\right)\right\}\\
	& =\frac{\pi^{\frac{n}{2}}}{(n+2)\Gamma(\frac{n}{2} +1)} E(\lambda) ,
\end{aligned}
$$
since 
$\int_{S^{n-1}(1)} y_i^4 dy^{n-1}=3 \int_{S^{n-1}(1)} y_i^2 y_j^2 dy^{n-1}=\frac{3 \pi^{n / 2}}{(n+2)\Gamma(\frac{n}{2} +1)}$ for $i\ne j$ (c.f. (A.4) and (A.5) in \cite{G} )  and 
 each the integral of which $a_i$ appears for odd times is zero because the integral over one hemisphere cancels the integral over the other.

We calculate that
\begin{equation}\label{every_term}
	\begin{aligned}
		&\quad\mathcal{L}(B(o,r_0),\tilde{g}, \tilde{u},t)\\
		=&4t \int_{B(o,r_0)}| \nabla H|^2\xi^2d\tilde{\mu}+4t \int_{B(o,r_0)}| \nabla \xi |^2H^2d\tilde{\mu}+2t \int_{B(o,r_0)}\nabla H^2 \cdot \nabla \xi^2 d\tilde{\mu}-\int_{B(o,r_0)} H^2 \xi^2\log H^2d\tilde{\mu}\\
		& -\int_{B(o,r_0)} H^2 \xi^2\log \xi^2d\tilde{\mu}+\int_{B(o,r_0)} H^2 \xi^2 d\tilde{\mu} \log{\int_{B(o,r_0)} H^2 \xi^2 d\tilde{\mu}}-(n+\frac{n}{2}\log{4\pi t})\int_{B(o,r_0)} H^2 \xi^2 d\tilde{\mu} \\
		=& -n\int_{B(o,r_0)} H^2 \xi^2 d\tilde{\mu}+\int_{B(o,r_0)} H^2 \xi^2 d\tilde{\mu} \log{\int_{B(o,r_0)} H^2 \xi^2 d\tilde{\mu}}+\int_{B(o,r_0)} H^2 \xi^2 \frac{|x|^2}{2t}d\tilde{\mu}\\
		&-\int_{B(o,r_0)} H^2 \xi^2\log \xi^2d\tilde{\mu}+4t \int_{B(o,r_0)}| \nabla \xi |^2H^2d\tilde{\mu}+2t \int_{B(o,r_0)}\nabla H^2 \cdot \nabla \xi^2 d\tilde{\mu},
	\end{aligned}
\end{equation}
here $d\tilde{\mu}=	\operatorname{det}\left(\tilde{g}_{k \ell}(x)\right)^\frac{1}{2} dx^n$. We next compute the  power series expansion for every term in (\ref{every_term}).

Recall that in the geodesic normal coordinates $\{x^k\}^n_{k=1}$,  $\operatorname{det}\left(\tilde{g}_{k \ell}(x)\right) $ has the following  power series expansion near $p$ (see Lemma 3.4 on p. 210 of \cite{STbook})
$$
\begin{aligned}
	\operatorname{det}\left(\tilde{g}_{k \ell}(x)\right)^\frac{1}{2} &  =1-\frac{1}{6} R_{ij}(p) x^i x^j-\frac{1}{12} \nabla_k R_{ij}(p) x^i x^j x^k +v_{ijkl} x^i x^j x^k x^l  +O\left(d^5\right),
\end{aligned}
$$		
where $v_{ijkl}=\frac{1}{24}\left(-\frac{3}{5} \nabla_k \nabla_l R_{ij}-\frac{2}{15}\sum\limits_{s, t=1}^n R_{isjt}R_{kslt}+\frac{1}{3} R_{ij} R_{k l}\right)(p)$ and $d\doteq d(p,x)$.  
Then we have
\begin{equation*}
	\begin{aligned}
	\xi^2(x)\operatorname{det}\left(\tilde{g}_{k \ell}(x)\right)^\frac{1}{2} = P +G,
	\end{aligned}
\end{equation*}	
where
$$
	\begin{aligned}
	 P  :=&1+\left(a_{ij}-\frac{1}{6} R_{ij}(p)\right) x^i x^j+\left(e_{ijk}-\frac{1}{12} \nabla_k R_{ij}(p)\right) x^i x^j x^k +\left(b_{ijkl}+v_{ijkl}-\frac{1}{6}a_{ij} R_{kl}(p)\right)x^i x^j x^k x^l\\
	& + \left(\alpha+q_ix^i+(d_{ij}-\frac{\alpha}{6}R_{ij}(p))x^ix^j\right)t+\beta t^2,
\end{aligned}
$$
and
$$
G:=o(d^2)t+o\left(d^4\right)+o\left(t^2\right).
$$
Moreover,
\begin{equation}\label{est_key}
	\begin{aligned}
		&\quad\int_{B(o,r_0)} H^2   \xi^2	\operatorname{det}\left(\tilde{g}_{k \ell}(x)\right)^\frac{1}{2}dx^n\\
		=	&
	\int_{B(o,r_0)} H^2 \left( P +G\right)dx^n\\
	=	&	\int H^2 Pdx^n-\int_{T_pM \backslash B(o,r_0)} H^2 Pdx^n+
\int_{B(o,r_0)} H^2 Gdx^n\\
	=	&	\int H^2 Pdx^n+o\left(t^2\right),
	\end{aligned}
\end{equation}
where we have used $\left|\int_{T_pM \backslash B(o,r_0)} H^2 Pdx^n\right|\le c_1 e^{-c_2\frac{r_0^2}{t}}$ when  $t$ sufficient small and $\left|\int_{B(o,r_0)} H^2 Gdx^n\right|=o\left(t^2\right)$ since $G=o(d^2)t+o\left(d^4\right)+o\left(t^2\right)$ and $G$ is bounded in $B(o,r_0)$.
By (\ref{3.1_e11}), (\ref{3.1_e21}) and (\ref{est_key}), we get
\begin{equation}\label{term1}
	\begin{aligned}
		&\quad\int_{B(o,r_0)} H^2   \xi^2	\operatorname{det}\left(\tilde{g}_{k \ell}(x)\right)^\frac{1}{2}dx^n\\
		=	&
		1+\left(2tr(a)-\frac{1}{3}\operatorname{Sc}(p)+\alpha \right)t\\
		&+\left[4\left(E(b)+E(v)-\frac{1}{6}E(a\otimes \operatorname{Rc}(p))\right)+2\left(tr(d)-\frac{\alpha}{6}\operatorname{Sc}(p)\right)+\beta \right]t^2+o(t^2),
	\end{aligned}
\end{equation}
where we used (\ref{3.1_e0})-(\ref{3.1_e22}) and
each the integral of which $x_i$ appears odd times is zero, because the integral over one hemisphere cancels the integral over the other.
It follows that
\begin{equation}\label{term2}
\begin{aligned}
	&\int_{B(o,r_0)} H^2 \xi^2 d\tilde{\mu} \log{\int_{B(o,r_0)} H^2 \xi^2 d\tilde{\mu}} \\
	=	&
\left(2tr(a)-\frac{1}{3}\operatorname{Sc}(p)+\alpha \right)t\\
	&+\left[4\left(E(b)+E(v)-\frac{1}{6}E(a\otimes \operatorname{Rc}(p))\right)+2\left(tr(d)-\frac{\alpha}{6}\operatorname{Sc}(p)\right)+\beta \right]t^2\\
	&+\frac{\left(2tr(a)-\frac{1}{3}\operatorname{Sc}(p)+\alpha \right)^2}{2}t^2+o(t^2),
\end{aligned}
\end{equation}
where we have used $f(t)\log{f(t)}=c_1t+(c_2+\frac{c_1^2}{2})t^2+o(t^2)$ if $f(t)=1+c_1t+c_2t^2+o(t^2)$.
Moreover, 
we conclude from (\ref{3.1_e0}), (\ref{3.1_e12}), (\ref{3.1_e22}) and similar arguments as (\ref{est_key}) that
\begin{equation}\label{term3}
	\begin{aligned}
		&\quad\int_{B(o,r_0)}  H^2  \frac{|x|^2}{2t} \xi^2	\operatorname{det}\left(\tilde{g}_{k \ell}(x)\right)^\frac{1}{2}dx^n\\
		=	&
		n+\left[2(n+2)tr(a)-\frac{1}{3}(n+2)\operatorname{Sc}(p)+\alpha n\right]t\\
		&+\left[4(n+4)\left(E(b)+E(v)-\frac{1}{6}E(a\otimes \operatorname{Rc}(p))\right)+2(n+2)\left(tr(d)-\frac{\alpha}{6}\operatorname{Sc}(p)\right)+\beta n\right]t^2+o(t^2),
	\end{aligned}
\end{equation}
We also have
\begin{equation*}
	\begin{aligned}
		&\quad \xi^2\log{\xi^2} \\
		=	&
		\left(\phi_0+\phi_1 t+\phi_2 t^2+o(t^2)\right)\left(\log{\phi_0}+\log{\left(1+\frac{\phi_1}{\phi_0} t+\frac{\phi_2}{\phi_0} t^2+o(t^2)\right)}\right)\\
	=	&
	\left(\phi_0+\phi_1 t+\phi_2 t^2+o(t^2)\right)\left[\log{\phi_0}+\frac{\phi_1}{\phi_0} t+\left(\frac{\phi_2}{\phi_0}-\frac{\phi_1^2}{2\phi_0^2}\right) t^2+o(t^2)\right]\\
		=	&
\phi_0\log{\phi_0}+\phi_1(1+\log{\phi_0}) t+\left(\phi_2\log{\phi_0}+\phi_2+\frac{\phi_1^2}{2\phi_0^2}\right) t^2+o(t^2),
	\end{aligned}
\end{equation*}
\begin{equation*}
	\begin{aligned}
		&\quad \phi_0\log{\phi_0} \\
		=	&
		\left(1+a_{ij}x^ix^j+e_{ijk}x^ix^jx^k+b_{ijkl}x^ix^jx^kx^l+o(d^4)\right)\times\\
		&\left(a_{ij}x^ix^j+e_{ijk}x^ix^jx^k+\left(b_{ijkl}-\frac{a_{ij}a_{kl}}{2}\right)x^ix^jx^kx^l+o(d^4)\right)\\
		=&a_{ij}x^ix^j+e_{ijk}x^ix^jx^k+	\left(b_{ijkl}+\frac{a_{ij}a_{kl}}{2}\right)x^ix^jx^kx^l+o(d^4),
	\end{aligned}
\end{equation*}
\begin{equation*}
	\begin{aligned}
		&\quad \phi_1(1+\log{\phi_0}) \\
		=	&
		\left(\alpha+q_ix^i+d_{ij}x^ix^j+o(d^2)\right)\left(1+a_{ij}x^ix^j+o(d^2)\right)\\
		=&\alpha+q_ix^i+\left(d_{ij}+\alpha a_{ij}\right)x^ix^j+o(d^2),
	\end{aligned}
\end{equation*}
\begin{equation*}
	\begin{aligned}
		\phi_2\log{\phi_0}+\phi_2+\frac{\phi_1^2}{2\phi_0^2}=\beta+\frac{\alpha^2}{2}+o(1),
	\end{aligned}
\end{equation*}
and hence
\begin{equation*}
	\begin{aligned}
		&\quad \xi^2\log{\xi^2}\operatorname{det}\left(\tilde{g}_{k \ell}(x)\right)^\frac{1}{2} \\
		=	&
	a_{ij}x^ix^j+e_{ijk}x^ix^jx^k+	\left(b_{ijkl}+\frac{a_{ij}a_{kl}}{2}-\frac{1}{6}a_{ij}R_{kl}(p)\right)x^ix^jx^kx^l+o(d^4)\\
	&+\left[\alpha+q_ix^i+\left(d_{ij}+\alpha a_{ij}-\frac{\alpha}{6}R_{ij}(p)\right)x^ix^j+o(d^2)\right]t+\left(\beta+\frac{\alpha^2}{2}+o(1)\right)t^2.
	\end{aligned}
\end{equation*}
It follows from (\ref{3.1_e11}), (\ref{3.1_e21}) and similar arguments as (\ref{est_key}) that
\begin{equation}\label{term4}
	\begin{aligned}
		&\quad \int_{B(o,r_0)}  H^2\xi^2\log{\xi^2}\operatorname{det}\left(\tilde{g}_{k \ell}(x)\right)^\frac{1}{2} dx^n \\
		=	&\left(2tr(a)+\alpha\right)t\\
		&
		+\left(2E(a\otimes a)+4E(b)-\frac{2}{3}E(a\otimes \operatorname{Rc}(p))+2tr(d)+2\alpha tr(a)-\frac{1}{3}\alpha \operatorname{Sc}(p)+\beta+\frac{\alpha^2}{2}\right)t^2+o(t^2).
	\end{aligned}
\end{equation}
Since
\begin{equation*}
	\begin{aligned}
		&\quad |\nabla \xi|^2= \frac{|\nabla \xi^2|^2}{4\xi^2}\\
		&=\frac{|\nabla \phi_0|^2+tO(d)+O(t^2)}{4\xi^2}\\
		&=\frac{|\sum\limits_{k=1} \frac{\partial}{\partial x^k} (a_{ij}x^ix^j)|^2+O(d^2)+tO(d)+O(t^2)}{4\xi^2}\\
		&=\sum_{i=1} a_{ij}a_{ik}x^jx^k+O(d^2)+tO(d)+O(t^2),
	\end{aligned}
\end{equation*}
we get from (\ref{3.1_e11})  and similar arguments as (\ref{est_key}) that
\begin{equation}\label{term5}
	\begin{aligned}
	4t\int_{B(o,r_0)} 	|\nabla \xi|^2H^2\operatorname{det}\left(\tilde{g}_{k \ell}(x)\right)^\frac{1}{2} dx^n = 8tr(a^2)t^2+o(t^2).
	\end{aligned}
\end{equation}
Since
\begin{equation*}
	\begin{aligned}
	&\quad 2t \nabla H^2 \cdot \nabla \xi^2 \operatorname{det}\left(\tilde{g}_{k \ell}(x)\right)^\frac{1}{2}	\\
	&	= -\frac{1}{2} \left(\nabla d^2 \cdot \nabla \xi^2\right)H^2\operatorname{det}\left(\tilde{g}_{k \ell}(x)\right)^\frac{1}{2}\\
	&	= -\frac{1}{2}\tilde{g}^{rs} \frac{\partial d^2}{\partial x_r} \frac{\partial \xi^2}{\partial x_s} H^2\operatorname{det}\left(\tilde{g}_{k \ell}(x)\right)^\frac{1}{2}\\
		&	= -\frac{1}{2}\left(\delta_{rs}+\frac{1}{3}R_{rijs}(p)x^ix^j+O(d^3)\right) \frac{\partial d^2}{\partial x_r} \frac{\partial \xi^2}{\partial x_s} H^2\operatorname{det}\left(\tilde{g}_{k \ell}(x)\right)^\frac{1}{2}\\
	&=\left[-2a_{ij}x^ix^j-3e_{ijk}x^ix^jx^k-\left(4b_{ijkl}-\frac{1}{3}a_{ij}R_{kl}(p)\right)x^ix^jx^kx^l +o(d^4)\right.\\
	&\quad\left.+\left(-q_ix^i-2d_{ij}x^ix^j+o(d^2)\right)t+o(1)t^2-\frac{2}{3}\sum_r R_{ijkr}(p)a_{rl}x^ix^jx^kx^l\right] H^2,
	\end{aligned}
\end{equation*}
we get from (\ref{3.1_e11}), (\ref{3.1_e21}) and similar arguments as (\ref{est_key}) that
\begin{equation}\label{term6}
	\begin{aligned}
		&\quad \int_{B(o,r_0)}  2t \nabla H^2 \cdot \nabla \xi^2 	\operatorname{det}\left(\tilde{g}_{k \ell}(x)\right)^\frac{1}{2} dx^n \\
		&	= -4tr(a)t-\left(4tr(d)+16E(b)-\frac{4}{3}E(a\otimes \operatorname{Rc})\right)t^2+o(t^2),
	\end{aligned}
\end{equation}
where  we have used
$$
E(\sum_rR_{ijkr}(p)a_{rl})=\sum\limits_{i, j}\sum_r\left(R_{iijr}(p)a_{rj}+R_{ijir}(p)a_{rj}+R_{ijjr}(p)a_{ri}\right)=0.
$$

 Combining with (\ref{term1})(\ref{term2})(\ref{term3})(\ref{term4})(\ref{term5}) and (\ref{term6}),
we have
\begin{equation}\label{star}
	\begin{aligned}
&\quad\mathcal{L}(B(o,r_0),\tilde{g}, \tilde{u},t)\\
&=-\operatorname{Sc}(p)t-\left(-20E(v)+\frac{4}{3}E(a\otimes \operatorname{Rc}(p))+2E(a\otimes a)+2\alpha tr(a)-8tr(a^2)\right.\\
&\quad\quad\quad\left.-\frac{\left(2tr(a)-\frac{1}{3}\operatorname{Sc}(p)+\alpha \right)^2}{2}+\frac{2}{3}\alpha \operatorname{Sc}(p)+\frac{1}{2}\alpha^2\right)t^2+o(t^2)\\
&=-\operatorname{Sc}(p)t-\left(-20E(v)+2tr(a)\operatorname{Sc}(p)+\frac{8}{3}\sum\limits_{i j=1}^n a_{ij}R_{ij}(p)-4tr(a^2)-\frac{1}{18}\operatorname{Sc}^2(p)+\alpha \operatorname{Sc}(p)\right)t^2+o(t^2)\\
&=-\operatorname{Sc}(p)t-\left(\Delta \operatorname{Sc}(p)+\frac{1}{6}|\operatorname{Rm}|^2(p)-\frac{1}{3}\operatorname{Sc}^2(p)-\frac{4}{9}|\operatorname{Rc}|^2(p)+2tr(a)\operatorname{Sc}(p)\right.\\
&\quad\quad\quad\left.+\frac{8}{3}\sum\limits_{i j=1}^n a_{ij}R_{ij}(p)-4tr(a^2)+\alpha \operatorname{Sc}(p)\right)t^2+o(t^2)\\
&=-\operatorname{Sc}(p)t-\left[\Delta \operatorname{Sc}(p)-\frac{1}{3}\operatorname{Sc}^2(p)+2tr(a)\operatorname{Sc}(p)+\alpha \operatorname{Sc}(p)+\frac{1}{6}|\operatorname{Rm}|^2(p)-4\left|a-\frac{1}{3}\operatorname{Rc}(p)\right|^2\right]t^2+o(t^2)
	\end{aligned}		
\end{equation}
where we have used
$
E(a\otimes \operatorname{Rc})=tr(a)\operatorname{Sc}(p)+2\sum\limits_{i j=1}^n a_{ij}R_{ij}(p),
$
$
E(a\otimes a)=\left(tr(a)\right)^2+2 tr (a^2)
$
and
$$
\begin{gathered}
	 E(v) =\frac{1}{24} \sum_{i j=1}^n\left\{-\frac{3}{5} \nabla_{i i} R_{j j}-\frac{6}{5} \nabla_{i j} R_{i j}+\frac{1}{3} R_{i i}R_{j j}\right. \\ \left.\quad+\frac{2}{3} R_{i j}^2-\frac{2}{15} \sum_{s, t=1}^n\left(R_{i s i t} R_{j s j t}+R_{i s j t}^2+R_{i s j t} R_{i t j s}\right)\right\} \\ =\frac{1}{360}\left(5 \operatorname{Sc}^2+8|\operatorname{Rc}|^2-3|\operatorname{Rm}|^2-18 \Delta \operatorname{Sc}\right)(p) 
	 \end{gathered}
$$
(c.f. P197 in \cite{G}). Combining (\ref{star}) with (\ref{asymptotic_est_2}), we conclude (\ref{expansion_L}) holds.
Also notice that
	\begin{equation}\label{term7}
	\begin{aligned}
	 &\quad t\int_{B(o,r_0)}  \operatorname{Sc}(\tilde{g})(x)H^2\xi^2 \operatorname{det}\left(\tilde{g}_{k \ell}(x)\right)^\frac{1}{2}dx^n	\\
		&= t\int_{B(o,r_0)} \left(\operatorname{Sc}(p)+\nabla_i\operatorname{Sc}(p)x^i+\frac{1}{2}\nabla_i\nabla_j \operatorname{Sc}(p) x^i x^j+o(d^2)\right)\\
		&\times
		\left(1+a_{ij}x^ix^j+o(d^2)+\alpha t+o(t)\right)\times (1-\frac{1}{6} R_{ij}(p) x^i x^j+o(d^2))H^2dx^n	\\
		&= t\int \left(\operatorname{Sc}(p)+\frac{1}{2}\nabla_i\nabla_j \operatorname{Sc}(p) x^i x^j+\operatorname{Sc}(p)a_{ij} x^i x^j-\frac{1}{6}\operatorname{Sc}(p) R_{ij}(p)x^i x^j+\alpha \operatorname{Sc}(p) t\right)H^2dx^n+o(t^2)\\
		&=\operatorname{Sc}(p)t+\left(\Delta \operatorname{Sc}(p)-\frac{1}{3}\operatorname{Sc}^2(p)+2tr(a)\operatorname{Sc}(p)+\alpha \operatorname{Sc}(p)\right)t^2+o(t^2)
	\end{aligned}
\end{equation}
Then (\ref{expansion_nu}) follows from (\ref{expansion_L}), (\ref{term7}).

	\end{proof}

	\begin{rem}
		From observations of (\ref{term1})--(\ref{term6}), we can see that $\mathcal{L}(V,g,u,t)$ must take the form
		\begin{equation}\label{see_why}
			\mathcal{L}(V,g,u,t) = \big(C_1\operatorname{tr}(a) + C_2R(p) + C_3\alpha\big)t + o(t^2)
		\end{equation}
		for some constants $C_1$, $C_2$, and $C_3$. 
		Direct calculations in the proof of Theorem \ref {expansion_L_a} show that $C_1 = C_3 = 0$. An alternative way to see why $C_1 = C_3 = 0$ is the following: From the logarithmic Sobolev inequality on Euclidean space, we know that $\mathcal{L}(V,g,u,t) \geq 0$ for all $u \in W_0^{1,2}(V)$ when $\operatorname{Rm} \equiv 0$ on $V$. This implies
		$$
		\mathcal{L}(V,g,u,t) = \big(C_1\operatorname{tr}(a) + C_3\alpha\big)t + o(t^2) \geq 0,
		$$
		for arbitrary $a$ and $\alpha$ when $\operatorname{Rm} \equiv 0$ on $V$. Consequently, $C_1$ and $C_3$ must vanish. 
		The same argument shows that the coefficients of $\operatorname{tr}(d)$, $E(b)$, $\alpha\operatorname{tr}(a)$, and $\beta$ - which appear in calculations of the $O(t^2)$ terms of $\mathcal{L}(V,g,u,t)$ - must also be zero.
	\end{rem}
	
		\begin{rem}
		By  (\ref{term3}) and  (\ref{term4}), we get that for $u(x,t)$ satisfying conditions of Theorem \ref{expansion_L},
		\begin{equation}\label{888888}
			\int_{{V}} u^2 \log u^2d\mu=-\frac{n}{2}-\frac{n}{2}\left(\log{4\pi t}\right)\int_{{V}} u^2d\mu+\left(-ntr(a)+\frac{1}{6}\left(n+2\right)\operatorname{Sc}(p)+\left(1-\frac{n}{2}\right)\alpha\right)t+o(t),
		\end{equation}
		holds for $t$ is sufficient small.
	\end{rem}
	
	Now we give the proof of Theorem \ref{expansion_opt}.
	\begin{proof}[Proof of Theorem \ref{expansion_opt}]
(\ref{expansion_nu_opt}) and (\ref{expansion_l_opt}) follow from the
Theorem \ref{expansion_L_a} by choosing $a=\frac{1}{3}\operatorname{Rc}(p)$. And we see from the $O(t)$ term in (\ref{term1}) that $\alpha=-\frac{1}{3}\operatorname{Sc}(p)$ if $\int_V u^2 d\mu\equiv 1$ and  $a=\frac{1}{3}\operatorname{Rc}(p)$. So
(\ref{expansion_n_opt}) holds.
	\end{proof}

\section{proofs of Theorem \ref{mu_rigidity_extension} and
	Theorem  \ref{mu_rigidity}}

Before presenting the proof of Theorem \ref{mu_rigidity_extension}, we need the following lemma.
 
 \begin{lem}\label{curvature_estimate}
 	Let $(M^n,g)$ be an $n$-dimensional manifold and $p\in \mathring{M^n}$.
 	If there exist neighborhood $V_p$ of $p$ and $u(x,t)\in \mathcal{B}_p(V_p)$   satisfying
 	\begin{equation}
 		\mathcal{W}(V_p,g, u,t)+	\gamma t^2\int_{V_p}|\operatorname{Rm}|^2u^2d\mu\ge -Qt^2 -o(t^2),
 	\end{equation}	
 	for $\gamma<\frac{1}{6}$,
 	then $|\operatorname{Rm}|^2(p)\le \frac{Q}{\gamma-\frac{1}{6}}$.
 \end{lem}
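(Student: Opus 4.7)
The plan is to apply the expansion formula \eqref{expansion_nu_opt} from Theorem \ref{expansion_opt} directly, since the hypothesis $u(x,t)\in \mathcal{B}_p(V_p)$ is tailor-made for this. This gives
\begin{equation*}
\mathcal{W}(V_p,g,u,t) = -\tfrac{1}{6}|\operatorname{Rm}|^2(p)\,t^2 + o(t^2).
\end{equation*}
Thus the assumed inequality rearranges, after dividing by $t^2$, to
\begin{equation*}
-\tfrac{1}{6}|\operatorname{Rm}|^2(p) + \gamma\,\frac{1}{t^2}\cdot t^2\!\int_{V_p}|\operatorname{Rm}|^2 u^2\,d\mu \;\geq\; -Q - o(1),
\end{equation*}
so the whole problem reduces to computing $\lim_{t\to 0}\int_{V_p}|\operatorname{Rm}|^2 u^2\,d\mu$.

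Next I would show that this limit equals $|\operatorname{Rm}|^2(p)$. The idea is that $u^2\,d\mu$ is an approximate identity concentrating at $p$: indeed, $u = (4\pi t)^{-n/4} e^{-d(p,x)^2/(8t)}\eta$ with $\eta^2(p,0)=\phi_0(p)=1$, and (\ref{term1}) in the proof of Theorem \ref{expansion_L_a} already gives $\int_{V_p} u^2\,d\mu = 1 + O(t)$. For any continuous function $f$ (in particular $f=|\operatorname{Rm}|^2$) one writes
\begin{equation*}
\int_{V_p} f\,u^2\,d\mu = f(p)\int_{V_p}u^2\,d\mu + \int_{V_p}(f-f(p))\,u^2\,d\mu,
\end{equation*}
and the second integral vanishes as $t\to 0$ by the Gaussian concentration (the same exponential tail estimate \eqref{asymptotic_est_2} used in the proof of Theorem \ref{expansion_L_a}, combined with continuity of $f$ at $p$). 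Hence $\int_{V_p}|\operatorname{Rm}|^2 u^2\,d\mu \to |\operatorname{Rm}|^2(p)$.

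Plugging this in and sending $t\to 0$ gives
\begin{equation*}
\left(\gamma - \tfrac{1}{6}\right) |\operatorname{Rm}|^2(p) \;\geq\; -Q,
\end{equation*}
which, since $\gamma < \tfrac{1}{6}$, yields the stated bound $|\operatorname{Rm}|^2(p) \leq Q/(\tfrac{1}{6}-\gamma)$.

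The only nontrivial step is the convergence of the weighted integral of $|\operatorname{Rm}|^2$, and this is essentially immediate from the Gaussian concentration built into the definition of $\mathcal{B}_p(V_p)$; the tail estimates of the form \eqref{asymptotic_est} and \eqref{est_key} already established in Section 2 handle it with no new work. Everything else is simple algebra.
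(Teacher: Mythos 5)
Your proof is correct and takes essentially the same approach as the paper: apply \eqref{expansion_nu_opt}, observe that $\gamma t^2\int_{V_p}|\operatorname{Rm}|^2u^2\,d\mu = \gamma|\operatorname{Rm}|^2(p)t^2 + o(t^2)$ (the paper simply cites ``similar computations as in Theorem~\ref{expansion_L_a}'' for this; you spell it out as an approximate-identity argument, which is a clean way to see it), and compare the $t^2$ coefficients. One small point worth flagging: the lemma as printed states the conclusion with the denominator $\gamma-\frac{1}{6}$, which is negative for $\gamma<\frac{1}{6}$ and is evidently a sign typo; the correct bound $|\operatorname{Rm}|^2(p)\le Q/(\frac{1}{6}-\gamma)$, which you derive and which matches Theorem~\ref{mu_rigidity_extension}(i), is what the argument actually yields.
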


\begin{proof}
 Clearly, by the similar computations as in Theorem \ref{expansion_L_a}, we have
$$
	\gamma t^2\int_{V_p}|\operatorname{Rm}|^2u^2d\mu=	\gamma |\operatorname{Rm}|^2(p)t^2+o(t^2).
$$
Then it
directly follows from (\ref{expansion_nu_opt}) that
\begin{equation}\label{1.4_1}
	\mathcal{W}(V_p,g, u,t)+	\gamma t^2\int_{V_p}|\operatorname{Rm}|^2u^2d= (-\frac{1}{6}+\gamma) |\operatorname{Rm}|^2(p)t^2+o(t^2).
\end{equation}
Then Lemma \ref{curvature_estimate} follows from (\ref{1.4_1}) directly.
\end{proof}

Now we give the proof of Theorem \ref{mu_rigidity_extension}.

\begin{proof}[Proof of Theorem \ref{mu_rigidity_extension}]
	Note that Theorem \ref{mu_rigidity_extension} (ii) just follows from Theorem \ref{mu_rigidity_extension} (i) by
	taking $\gamma=0$. So we only need to prove Theorem \ref{mu_rigidity_extension} (i).
For  any $p\in V$, and when $u(x,t)\in \mathcal{B}_p(V)$, by the assumption \eqref{mu_rigidity_extension_i} of Theorem \ref{mu_rigidity_extension}
we have  
$$
\frac{\gamma t^2\int_{V}|\operatorname{Rm}|^2u^2d\mu+\mathcal{W}(V,g, u,t)}{\int_V u(x,t)^2d\mu}\ge -Qt^2-o(t^2),
$$
and 
$$
\int_V u(x,t)^2d\mu= 1+O(t).
$$
Hence, we get
$$
\gamma t^2\int_{V}|\operatorname{Rm}|^2u^2d\mu+\mathcal{W}(V,g, u,t)\ge -Qt^2-o(t^2),
$$
Then Theorem \ref{mu_rigidity_extension} (i) follows from Theorem \ref{curvature_estimate}  directly.
\end{proof}

Before presenting the proof of Theorem \ref{mu_rigidity}, we need the following lemma.
	\begin{lem}\label{section}
	Let $(M^n,g)$ be an $n$-dimensional manifold and $p\in \mathring{M^n}$. 	
If 	 there exists a neighborhood $V_p$ of $p$ satisfying
its logarithmic Sobolev inequality only differs from  that of Euclidean case with $o(t^2)$, i.e.  there exist $\tau_0>0$ such that for all $f \in W_0^{1,2}(V_p)$, $\int_{V_p} f^2 d vol=1$, $p\in \operatorname{supp} \{f\}$ and $t\le\tau_0$,
\begin{equation}\label{comparison_iso_2_6}
	\int_{V_p} f^2 \log f^2 d vol \le \int_{V_p}4t |\nabla f|^2 d vol-n-\frac{n}{2} \log (4 \pi t)+o(t^2),
\end{equation}
Then the scalar curvature at $p$ satisfies
$$
\operatorname{Sc}(p)\le 0.
$$	
If we assume addtionally that
\begin{equation}\label{R_point_compare_6}
	\operatorname{Sc}(p)    \ge 0, \quad	\Delta \operatorname{Sc}(p) \ge 0, 
\end{equation}
then the sectional curvature at $p$ satisfies
$$\operatorname{Sec}(p)=0.$$
	\end{lem}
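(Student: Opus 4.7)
The strategy is to match the hypothesis (\ref{comparison_iso_2_6}) term-by-term against the power series expansion (\ref{expansion_n_opt}) of $\mathcal{L}(V_p,g,u,t)$ from Theorem~\ref{expansion_opt}. Observe first that when a test function $u$ satisfies $\int_{V_p}u^2\,d\mu=1$, the definition (\ref{Log_sobole_func}) collapses to
\[
\mathcal{L}(V_p,g,u,t)=4t\int_{V_p}|\nabla u|^2\,d\mu-\int_{V_p}u^2\log u^2\,d\mu-n-\tfrac{n}{2}\log(4\pi t),
\]
so the hypothesis (\ref{comparison_iso_2_6}) is equivalent to the clean inequality $\mathcal{L}(V_p,g,u,t)\ge -o(t^2)$ for every admissible $u$ and all $t\le\tau_0$.

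Thus it suffices to produce a single family $u(\cdot,t)\in\mathcal{B}_p(V_p)$ satisfying $\int_{V_p}u^2\,d\mu=1$ and $p\in\operatorname{supp}(u)$, and to insert it into Theorem~\ref{expansion_opt}. Such a family is furnished by the remark following Theorem~\ref{expansion_opt}: take $u=\operatorname{K}(p,t;x,0)^{1/2}h(x)$, where $h$ is a smooth cut-off supported in $V_p$ with $h\equiv 1$ near $p$, and then rescale by the scalar factor $\bigl(\int u^2\,d\mu\bigr)^{-1/2}=1+O(t)$. This enforces the normalization without disturbing the leading coefficient $\phi_0$ in the expansion of $\eta^2$, and hence the rescaled function still belongs to $\mathcal{B}_p(V_p)$; the only effect is to reset the free parameter $\alpha=\phi_1(p)$ to the value $-\tfrac{1}{3}\operatorname{Sc}(p)$, exactly as recorded in the proof of Theorem~\ref{expansion_opt}. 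Plugging this $u$ into (\ref{expansion_n_opt}) together with the hypothesis yields
\[
-\operatorname{Sc}(p)\,t-\Bigl(\Delta\operatorname{Sc}(p)+\tfrac{1}{6}|\operatorname{Rm}|^2(p)\Bigr)t^2+o(t^2)\ \ge\ -o(t^2).
\]
Dividing by $t$ and letting $t\to 0^+$ gives $-\operatorname{Sc}(p)\ge 0$, i.e.\ $\operatorname{Sc}(p)\le 0$, which is the first conclusion.

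Under the additional hypothesis (\ref{R_point_compare_6}), this forces $\operatorname{Sc}(p)=0$. The displayed inequality then reduces to $-\bigl(\Delta\operatorname{Sc}(p)+\tfrac{1}{6}|\operatorname{Rm}|^2(p)\bigr)t^2+o(t^2)\ge -o(t^2)$, and dividing by $t^2$ and sending $t\to 0^+$ yields $\Delta\operatorname{Sc}(p)+\tfrac{1}{6}|\operatorname{Rm}|^2(p)\le 0$. Combined with the assumption $\Delta\operatorname{Sc}(p)\ge 0$, this forces $|\operatorname{Rm}|^2(p)\le 0$, whence $\operatorname{Rm}(p)=0$ and in particular $\operatorname{Sec}(p)=0$.

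All the analytic weight of the argument is carried by Theorem~\ref{expansion_opt}; once that is granted, the proof is pure order-matching in powers of $t$. The only point requiring mild care is verifying that a normalized cut-off of the conjugate heat kernel actually lies in $\mathcal{B}_p(V_p)$, but this is immediate because the class $\mathcal{B}_p(V_p)$ is defined only through the leading coefficient $\phi_0$ of $\eta^2$, which a multiplicative factor of $1+O(t)$ leaves unchanged. So I anticipate no substantive obstacle beyond this bookkeeping.
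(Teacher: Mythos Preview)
Your proposal is correct and follows essentially the same route as the paper: both arguments plug a test function from $\mathcal{B}_p(V_p)$ into the expansion of Theorem~\ref{expansion_opt} and read off the conclusions from the $O(t)$ and $O(t^2)$ coefficients. The only cosmetic difference is that you normalize $u$ first and invoke the specialized formula (\ref{expansion_n_opt}), whereas the paper keeps $u$ unnormalized, uses the scale invariance $\mathcal{L}(V,g,u,t)/\!\int u^2=$ normalized log-Sobolev expression together with $\int u^2=1+O(t)$ to deduce $\mathcal{L}(V,g,u,t)\ge -o(t^2)$, and then applies the general formula (\ref{expansion_l_opt}); once $\operatorname{Sc}(p)=0$ the two expansions agree, so the $O(t^2)$ step is identical.
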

\begin{proof}
Just notice that for any $u(x,t)\in \mathcal{B}_p(V)$, we have for $f^2=\frac{u^2}{\int_V u(x,t)^2d\mu}$
$$
\frac{\mathcal{L}(V,g, u,t)}{\int_V u(x,t)^2d\mu}= \int_{V}4t |\nabla f|^2 d\mu-\int_{V} f^2 \log f^2 d\mu-n-\frac{n}{2} \log (4 \pi t),
$$
and
$$
\int_V u(x,t)^2d\mu= 1+O(t).
$$
It follows that
$$
\mathcal{L}(V,g, u,t)\ge -o(t^2).
$$
By  $O(t)$ term of  (\ref{expansion_l_opt}), we get $\operatorname{Sc}(p)\le 0$.  Combining this with (\ref{R_point_compare_6}), we conclude that  $\operatorname{Sc}(p)= 0$. Then Lemma \ref{section}  follows from  (\ref{comparison_iso_2_6}) and  comparing the $O(t^2)$ term of (\ref{expansion_l_opt}). 

\end{proof}

Next we give the proof of Theorem  \ref{mu_rigidity}.
\begin{proof}[Proof of Theorem  \ref{mu_rigidity}]
From Lemma \ref{section}, we know that (\ref{log_sobolev}) implies $\operatorname{Sc}(x)\le 0$ for all $x\in V$. Combining this with (\ref{scalar_assump}), we get $\operatorname{Sc}\equiv 0$ on $V$. Then Theorem  \ref{mu_rigidity}  follows from  Lemma \ref{section} directly.
\end{proof}

\section{the proof of Theorem \ref{rigidity_iso_profile}}

Before presenting the proof of Theorem \ref{rigidity_iso_profile}, we need the following result.

\begin{lem}\label{main_comparison}
	Let $(M^n,g)$ be an $n$-dimensional manifold and point $p\in  M^n$.
		
	\noindent	(i)  If there exist a neighborhood $V_p$ of $p$,  $u(x,t)\in \mathcal{B}^{\alpha}_p(V_p)$ and $\bar{u}(x,t)\in \mathcal{B}^{\alpha}_{p_K}(M^n_K)$ for some point $p_K\in M^n_K$ satisfying for all $t\le T_0$ (for some $T_0>0$), 
	\begin{equation}\label{comparison_l}
		\mathcal{L}(V_p,g,u(x,t),t )\ge  \mathcal{L}(M^n_K,g_K, \bar{u},t)-o(t^2),
	\end{equation}
	then $$\operatorname{Sc}(p)\le n(n-1)K.$$	Moreover, if we assume additionally that
	 $$\operatorname{Sc}(p)\ge n(n-1)K
\text{ and } \Delta \operatorname{Sc}(p)\ge 0,$$ then $$\operatorname{Sec}(p)=K.$$
	
	\noindent	(ii)  	If there exist a neighborhood $V_p$ of $p$,  $u(x,t)\in \mathcal{B}_p(V_p)$ and $\bar{u}(x,t)\in \mathcal{B} _{p_K}(M^n_K)$ for some point $p_K\in M^n_K$ satisfying for all $t\le T_0$ (for some $T_0>0$), 
	\begin{equation}\label{comparison_v}
		\mathcal{W}(V_p,g, u(x,t),t )\ge  \mathcal{W}(M^n_K,g_K, \bar{u},t)-o(t^2),
	\end{equation}
	then $|\operatorname{Rm}|^2(p)\le 2n(n-1)K^2$.	Moreover, if we assume additionally that $|\operatorname{Sc}(p)|\ge n(n-1)|K|$, then $$\operatorname{Sec}(p)=K.$$
\end{lem}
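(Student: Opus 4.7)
The plan is to substitute the expansions from Theorem~\ref{expansion_opt} into the comparison hypotheses (\ref{comparison_l}) and (\ref{comparison_v}), then read off the inequalities coefficient by coefficient in $t$, exploiting that on the space form $M^n_K$ the relevant curvature quantities at any point $p_K$ take the constant values $\operatorname{Sc}(p_K)=n(n-1)K$, $\Delta\operatorname{Sc}(p_K)=0$ and $|\operatorname{Rm}|^2(p_K)=2n(n-1)K^2$.

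For part (i), applying (\ref{expansion_l_opt}) to both $u$ at $p$ and $\bar u$ at $p_K$ (with the same parameter $\alpha$) and plugging into (\ref{comparison_l}), I would first divide by $t$ and send $t\to 0^+$; the $O(1)$ terms give $-\operatorname{Sc}(p)\ge -n(n-1)K$, which is the first assertion. Under the additional hypothesis $\operatorname{Sc}(p)\ge n(n-1)K$ I then have equality $\operatorname{Sc}(p)=n(n-1)K$, so the $O(t)$ terms cancel on the two sides of (\ref{comparison_l}), and so do the contributions $\alpha\operatorname{Sc}(p)=\alpha n(n-1)K$ and $\tfrac{1}{3}\operatorname{Sc}^2(p)=\tfrac{1}{3}n^2(n-1)^2K^2$ appearing in the $O(t^2)$ coefficient. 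What remains, after dividing by $t^2$ and letting $t\to 0^+$, is
\begin{equation*}
\Delta\operatorname{Sc}(p)+\tfrac{1}{6}|\operatorname{Rm}|^2(p)\le \tfrac{1}{6}\cdot 2n(n-1)K^2=\tfrac{n(n-1)}{3}K^2,
\end{equation*}
and combining this with $\Delta\operatorname{Sc}(p)\ge 0$ yields $|\operatorname{Rm}|^2(p)\le 2n(n-1)K^2$.

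For part (ii) the analogous manoeuvre is immediate: (\ref{expansion_nu_opt}) gives $\mathcal{W}(V_p,g,u,t)=-\tfrac{1}{6}|\operatorname{Rm}|^2(p)t^2+o(t^2)$ and likewise on $M^n_K$, so dividing (\ref{comparison_v}) by $t^2$ and sending $t\to 0^+$ directly produces $|\operatorname{Rm}|^2(p)\le 2n(n-1)K^2$.

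The final step in each case is the algebraic rigidity that extracts $\operatorname{Sec}(p)=K$ from the bound $|\operatorname{Rm}|^2(p)\le 2n(n-1)K^2$ together with the hypothesis on the scalar curvature. I would use the orthogonal decomposition
\begin{equation*}
|\operatorname{Rm}|^2=|W|^2+\tfrac{4}{n-2}|E|^2+\tfrac{2}{n(n-1)}\operatorname{Sc}^2,
\end{equation*}
where $W$ is the Weyl tensor and $E=\operatorname{Rc}-\tfrac{\operatorname{Sc}}{n}g$ is the traceless Ricci. In case (i), $\operatorname{Sc}(p)=n(n-1)K$ makes the scalar contribution equal to $2n(n-1)K^2$, which already saturates the upper bound; this forces $W(p)=0$, $E(p)=0$, so that the Riemann tensor at $p$ is exactly that of constant sectional curvature $K$. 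In case (ii), $|\operatorname{Sc}(p)|\ge n(n-1)|K|$ makes the scalar contribution at least $2n(n-1)K^2$, so combining with $|\operatorname{Rm}|^2(p)\le 2n(n-1)K^2$ forces equality throughout, yielding $W(p)=E(p)=0$ and $|\operatorname{Sc}(p)|=n(n-1)|K|$, i.e.\ $\operatorname{Sec}(p)=\pm K$.

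The proof is essentially bookkeeping; there is no substantive obstacle beyond being careful about which terms in the two expansions cancel and in which order one takes $t\to 0$. The one tiny subtlety worth verifying is that the rest $o(t^2)$ absorbed on the right-hand side of the hypothesis has no effect on the two coefficient comparisons, which is immediate since we divide first by $t$, then by $t^2$.
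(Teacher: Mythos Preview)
Your approach is essentially identical to the paper's: substitute the expansions from Theorem~\ref{expansion_opt} into the two hypotheses, compare the coefficients of $t$ and then $t^2$, and finish via the orthogonal curvature decomposition. The one minor divergence is that in part~(ii) you (correctly) arrive only at $\operatorname{Sec}(p)=\pm K$, whereas the paper simply asserts $\operatorname{Sec}(p)=K$ without addressing the sign; this reflects a slight imprecision in the statement itself rather than any defect in your argument.
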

\begin{proof} (i)
	By (\ref{comparison_l}) and the (\ref{expansion_l_opt}) in Theorem \ref{expansion_opt}, we conclude that
	\begin{equation}\label{q_1}
	\begin{aligned}
		&-\operatorname{Sc}(p)t-\left(\Delta \operatorname{Sc}(p)+\frac{1}{6}|\operatorname{Rm}|^2(p)+\frac{1}{3}\operatorname{Sc}^2(p)+\alpha \operatorname{Sc}(p)\right)t^2 \\
		\ge& -R_K(p_K)t-\left(\frac{1}{6}|\operatorname{Rm}_K|^2(p_K)+\frac{1}{3}R_K^2(p_K)+\alpha R_K(p_K)\right)t^2-o(t^2)
	\end{aligned}
	\end{equation}
for all $t\le T_0$, where $R_K$ and $\operatorname{Rm}_K$ denotes the scalar curvature and curvature tensor of $n$-dimensional  space form of constant sectional curvature $K$. It follows that $\operatorname{Sc}(p)\le R_K(p_K)=n(n-1)K$. 

 If we have $\operatorname{Sc}(p)\ge n(n-1)K$, then $\operatorname{Sc}(p)=R_K(p_K)=n(n-1)K$. Since $\Delta \operatorname{Sc}(p)\ge 0$, 
we conclude from (\ref{q_1}) that 
\begin{equation}\label{q_2}
|\operatorname{Rm}|^2(p)\le |\operatorname{Rm}_K|^2(p_K)=2n(n-1)K^2.
\end{equation}
From the curvature orthogonal decomposition
\begin{equation}\label{ortho_decomposition}
	|\mathrm{\operatorname{Rm}}|^2=\left|\frac{\operatorname{Sc}}{2 n(n-1)} g \odot g\right|^2+\left|\frac{1}{n-2} \stackrel{\circ}{\mathrm{\operatorname{Rc}} } \odot g\right|^2+\left|\operatorname { Weyl }\right|^2 
\end{equation}
and hence we have
\begin{equation}\label{q_3}
	|\mathrm{\operatorname{Rm}}|^2(p)\ge \left|\frac{\operatorname{Sc}}{2 n(n-1)} g \odot g\right|^2(p)=2n(n-1)K^2
\end{equation}
with the equality holds if and only if $g$ has the constant sectional curvature. Then $\operatorname{Sec}(p)=K$ follows by (\ref{q_2}) and (\ref{q_3}).

(ii)
	By (\ref{comparison_v}) and (\ref{expansion_nu_opt}), we conclude that
	$$
-\frac{1}{6}|\operatorname{Rm}|^2(p)t^2 \ge -\frac{1}{6}|\operatorname{Rm}_K|^2(p_K)t^2-o(t^2)	
	$$
		for all $t\le T_0$. It follows that 
		\begin{equation}\label{q_4}
	|\operatorname{Rm}|^2(p)\le |\operatorname{Rm}_K|^2(p_K)=2n(n-1)K^2. 
		\end{equation}
		By the curvature orthogonal decomposition (\ref{ortho_decomposition}), we have
		\begin{equation}\label{q_5}
			|\mathrm{\operatorname{Rm}}|^2(p)\ge \left|\frac{\operatorname{Sc}}{2 n(n-1)} g \odot g\right|^2(p).
		\end{equation}
Then by (\ref{q_4}) and (\ref{q_5}) we conclude that $|\operatorname{Sc}(p)|\le n(n-1)|K|$.  If we assume additionally that $|\operatorname{Sc}(p)|\ge n(n-1)|K|$, then  $|\operatorname{Sc}(p)|=n(n-1)|K|$. Hence we have $|\mathrm{\operatorname{Rm}}|^2(p)=\left|\frac{\operatorname{Sc}}{2 n(n-1)} g \odot g\right|^2(p)$ and therefore $\operatorname{Sec}(p)=K$.
\end{proof}

As an application to Lemma \ref{main_comparison}, we get the following theorem.
	\begin{thm}\label{key_key}
	Let $(M^n,g)$ be an $n$-dimensional Riemannian manifold and $p\in \mathring{M^n}$.
	Suppose that 
there exist a neighborhood $V_p$ of $p$   and $\beta_0>0$ satisfying
\begin{equation}\label{comparison_iso_2}
	\inf\limits_{\Omega\subset V_p,p\in \Omega,\mathrm{Vol}(\Omega)=\beta}\mathrm{Area}(\partial \Omega)	\ge \operatorname{I}(M^n_K,\beta),
\end{equation}
for all $\beta<\beta_0$. Then the scalar curvature at $p$ satisfies
$$
\operatorname{Sc}(p)\le n(n-1)K.
$$	
If we assume addtionally that
	\begin{equation}\label{R_point_compare}
	\operatorname{Sc}(p)    \ge n(n-1)K, \quad	\Delta \operatorname{Sc}(p) \ge 0, 
\end{equation}
then the sectional curvature at $p$ satisfies
$$\operatorname{Sec}(p)=K.$$
\end{thm}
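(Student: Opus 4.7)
The plan is to invoke Lemma \ref{main_comparison}(i) by constructing matched test functions on $V_p$ and on $M_K^n$ linked via Schwarz symmetrization, thereby converting the isoperimetric hypothesis into the required logarithmic Sobolev functional comparison.

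First I would fix a test function
\[
u(x,t) = (4\pi t)^{-n/4} e^{-d(p,x)^2/(8t)}\,\eta(x,t) \in \mathcal{B}_p^{\alpha}(V_p),
\]
with $\operatorname{supp}(\eta(\cdot,t))$ compactly contained in $V_p$ and $\eta(p,0)>0$, so that each super-level set $\Omega_s(t) := \{x \in V_p : u(x,t) > s\}$ contains $p$ whenever $s < u(p,t)$. The Gaussian concentration of $u$ ensures that, outside a tail of $L^2$-mass exponentially small in $1/t$, one has $\mathrm{Vol}(\Omega_s(t)) < \beta_0$, so the hypothesis (\ref{comparison_iso_2}) applies to the relevant super-level sets. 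I would then define the Schwarz rearrangement $u^*(x,t)$ on $M_K^n$ centered at an arbitrary $p_K \in M_K^n$: the unique radially symmetric decreasing function with $\mathrm{Vol}_K(\{u^* > s\}) = \mathrm{Vol}(\Omega_s(t))$ for every $s$.

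Equimeasurability of $u$ and $u^*$ yields $\int_{M_K^n}(u^*)^q\,d\mu_K = \int_{V_p} u^q\,d\mu$ for every $q>0$, which matches the $\int u^2$, $\int u^2 \log u^2$ and $\log\int u^2$ pieces of $\mathcal{L}$ verbatim. Combining the co-area formula with (\ref{comparison_iso_2}) delivers a Pólya--Szegő-type inequality
\[
\int_{M_K^n}|\nabla u^*|^2\,d\mu_K \le \int_{V_p}|\nabla u|^2\,d\mu + E(t),
\]
where $E(t)$ is exponentially small in $1/t$, coming from those super-level sets whose volume exceeds $\beta_0$. Multiplying by $4t$ and assembling the other terms gives
\[
\mathcal{L}(V_p,g,u,t) \ge \mathcal{L}(M_K^n,g_K,u^*,t) - o(t^2).
\]
Next I would verify $u^*(\cdot,t) \in \mathcal{B}_{p_K}^{\alpha}(M_K^n)$: that $u^*$ factorizes as $(4\pi t)^{-n/4} e^{-d_K(p_K,\cdot)^2/(8t)}\,\eta^*$ with $(\eta^*)^2$ admitting the expansion $\phi_0^* + \phi_1^* t + \phi_2^* t^2 + o(t^2)$ around $(p_K,0)$, with $\phi_0^*(x) = 1 + \frac{(n-1)K}{3}\,d_K(p_K,x)^2 + o(d_K^2)$ (as dictated by $\operatorname{Rc}(g_K) = (n-1)K\,g_K$) and $\partial_t(\eta^*)^2(p_K,0) = \alpha$. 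This follows by direct computation from the radial symmetry of $u^*$, the volume identity $\mathrm{Vol}_K(\{u^* > s\}) = \mathrm{Vol}(\Omega_s(t))$, and the standard volume expansions for geodesic balls around $p$ and $p_K$; matching $\alpha$ reduces to comparing the $O(t)$ coefficients of $\int u^2\,d\mu$ on both sides. With $u^* \in \mathcal{B}_{p_K}^{\alpha}(M_K^n)$ in hand, Lemma \ref{main_comparison}(i) immediately yields $\operatorname{Sc}(p) \le n(n-1)K$, and under the additional hypotheses $\operatorname{Sc}(p) \ge n(n-1)K$ and $\Delta \operatorname{Sc}(p) \ge 0$, the conclusion $\operatorname{Sec}(p) = K$.

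The main obstacle is the last step: certifying that the rearrangement $u^*$ admits a local expansion near $p_K$ to the precise order demanded by the definition of $\mathcal{B}_{p_K}^{\alpha}$. Schwarz rearrangement does not in general preserve smoothness, but the Gaussian concentration of our test function and the symmetry of $M_K^n$ should force $u^*$ to agree with an explicit smooth radial profile up to corrections that perturb $\mathcal{L}$ only by $o(t^2)$. Tracking these corrections, and verifying that Pólya--Szegő holds with only $o(t^2)$ loss when hypothesis (\ref{comparison_iso_2}) fails on the exponentially small fraction of super-level sets of volume $\ge \beta_0$, forms the technical heart of the argument.
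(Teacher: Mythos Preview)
Your strategy coincides with the paper's: pick $u\in\mathcal{B}_p^\alpha(V_p)$, form its Schwarz rearrangement $\bar u$ on $M_K^n$, use the isoperimetric hypothesis with the co-area formula to get the P\'olya--Szeg\H{o} inequality, match the remaining terms by equimeasurability, and compare the expansions of $\mathcal{L}$. Your treatment of the level sets of volume $\ge\beta_0$ as producing only exponentially small errors is also in line with the paper's estimates of type \eqref{asymptotic_est}.

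There is, however, a circularity in your verification that $u^*\in\mathcal{B}_{p_K}^\alpha(M_K^n)$. The quadratic part of $\phi_0^*$ is \emph{not} automatically $\frac{(n-1)K}{3}\delta_{ij}$. If you carry out the computation you propose---matching the $O(t)$ coefficients of $\int u^2$ and $\int u^2\log u^2$ on the two sides via \eqref{term1} and \eqref{888888}---you find $\operatorname{tr}\bigl(\bar\nabla\bar\nabla(\eta^*)^2\bigr)(p_K,0)=\tfrac{2}{3}\operatorname{Sc}(p)$, and radial symmetry then gives $\bar a_{ij}=\tfrac{\operatorname{Sc}(p)}{3n}\delta_{ij}$. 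This equals $\tfrac{1}{3}\operatorname{Rc}_K(p_K)_{ij}=\tfrac{(n-1)K}{3}\delta_{ij}$ only when $\operatorname{Sc}(p)=n(n-1)K$. So you cannot invoke Lemma~\ref{main_comparison}(i), whose hypothesis requires $\bar u\in\mathcal{B}_{p_K}^\alpha$, to obtain $\operatorname{Sc}(p)\le n(n-1)K$ in the first place.

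The paper breaks this circle by appealing instead to the \emph{general} expansion \eqref{expansion_L} of Theorem~\ref{expansion_L_a}, valid for arbitrary $a_{ij}$ (not just $a=\tfrac{1}{3}\operatorname{Rc}$). Since the $O(t)$ coefficient there is $-\operatorname{Sc}$ irrespective of $a$ and $\alpha$, comparing $O(t)$ terms in the inequality $\mathcal{L}(V_p,g,u,t)\ge\mathcal{L}(M_K^n,g_K,\bar u,t)$ yields $\operatorname{Sc}(p)\le n(n-1)K$ directly. Only then, under the additional assumption $\operatorname{Sc}(p)\ge n(n-1)K$, does one get $\operatorname{Sc}(p)=n(n-1)K$; at that point $\bar u\in\mathcal{B}_{p_K}^\alpha$ becomes legitimate and Lemma~\ref{main_comparison}(i) delivers $\operatorname{Sec}(p)=K$. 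Your argument is easily repaired along these lines.
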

\begin{proof}
We take 
$u(x,t)=(4\pi t)^{-\frac{n}{4}}e^{-\frac{d(p,x)^2}{8t}}\eta\in \mathcal{B}^{\alpha}_p(V_p)$.
Next we apply the spherical symmetrization (Schwarz symmetrization) method.
We can choose $r_0$ sufficient small so that $\operatorname{supp}\{\eta\}\subset B(p,r_0)\subset\subset V_p$ and hence there exists $B^K(p_K,r_t)\subset M^n_K$ such that
$
\operatorname{Vol}_{g}(\{x \in M^n\mid u(x,t) >0\})=\operatorname{Vol}\left(B^K(p_K,r_t)\right)\le \beta_0.
$
Let
$
\bar{u}(\cdot,t)
$
be a non-negative rotational symmetric function for any $t$ such that
\begin{equation}\label{vol_equa}
\operatorname{Vol}\left(\left\{y \in  M^n_K\mid \bar{u}(y,t) \geq s\right\}\right)= \operatorname{Vol}\left(\{x \in V_p\mid u(x,t) \geq s\}\right)
\end{equation}
for all $s>0$ and $\bar{u}(y,t)=0$ when $\bar{d}(p_K,y)\geq r_t$. It is clear that  $\bar{u}(r,t) \doteqdot\bar{u}(y,t)$ is non-increasing in $r=\bar{d}(p_K,y)$ for any $t>0$. We define
$
\mathcal{M}_s \doteqdot\{x \in V_p\mid u(x,t) \geq s\}, \mathcal{M}_s^{\prime} \doteqdot\left\{y \in  M^n_K\mid \bar{u}(y,t) \geq s\right\}
$
and $\Gamma_s \doteqdot \partial \mathcal{M}_s$, $\Gamma_s^{\prime} \doteqdot \partial \mathcal{M}_s^{\prime}$. By the co-area formula and (\ref{vol_equa}), we have
\begin{equation}\label{level_set_equa}
	\int_{\Gamma_s} \frac{1}{|\nabla u(\cdot,t)|} d \sigma=\int_{\Gamma_s^{\prime}} \frac{1}{|\bar{\nabla} \bar{u}(\cdot,t)|} d \sigma_K,
\end{equation}
\begin{equation}\label{1.1_1}
	\int_{V_p} u(\cdot,t)^2 d\mu=	\int_{M^n_K} \bar{u}(\cdot,t)^2 d\mu_K,
\end{equation}
and
\begin{equation}\label{1.1_2}
	\int_{V_p} u(\cdot,t)^2 \log{u(\cdot,t)^2} d\mu=	\int_{M^n_K} \bar{u}(\cdot,t)^2\log{\bar{u}(\cdot,t)^2} d\mu_K.
\end{equation}
 Since $\mathcal{M}_s^{\prime}$ is a round ball in space form and by (\ref{comparison_iso_2}), we have
\begin{equation}\label{key}
\begin{aligned}
	\operatorname{Area}\left(\Gamma_s^{\prime}\right)  =\operatorname{I}(M^n_K,\operatorname{Vol}\left(\mathcal{M}_s^{\prime}\right)) \le \operatorname{I}(V_p,\operatorname{Vol}\left(\mathcal{M}_s\right)) \leq \operatorname{Area}\left(\Gamma_s\right).
\end{aligned}
\end{equation}
and hence
\begin{equation}\label{rem_using}
\begin{aligned}
	& \int_{\Gamma_s^{\prime}}|\bar{\nabla} \bar{u}(\cdot,t)| d \sigma_K \cdot \int_{\Gamma_s^{\prime}} \frac{1}{|\bar{\nabla} \bar{u}(\cdot,t)|} d \sigma_K \\
	=& \left(\text { Area }\left(\Gamma_s^{\prime}\right)\right)^2\leq\left(\text { Area }\left(\Gamma_s\right)\right)^2 \\
	\leq&  \int_{\Gamma_s}|\nabla u(\cdot,t)| d \sigma\cdot \int_{\Gamma_s} \frac{1}{|\nabla u(\cdot,t)|} d \sigma,
\end{aligned}
\end{equation}
where we used the H\"{o}lder inequality to obtain the last inequality. By this and (\ref{level_set_equa}), we have
$$
	 \int_{\Gamma_s^{\prime}}|\bar{\nabla} \bar{u}(\cdot,t)|  d \sigma_K
\leq \int_{\Gamma_s}|\nabla u(\cdot,t)| d \sigma.
$$
So we get by the co-area formula
\begin{equation}\label{1.1_3}
4t\int_{M^n_K}|\bar{\nabla} \bar{u}(\cdot,t)|^2  d \mu_K
\leq 4t\int_{V_p}|\nabla u(\cdot,t)|^2 d \mu.
\end{equation}
It follows that (\ref{1.1_1}), (\ref{1.1_2}) and (\ref{1.1_3}),
we have
\begin{equation}\label{1.1_8}
	\mathcal{L}(V,g, u,t )\ge  \mathcal{L}(M^n_K,g_K, \bar{u},t).
\end{equation}

For the case $K= 0$, we have $\mathcal{L}(V,g, u,t )\ge 0$ by the logarithmic Sobolev inequality on Euclidean space. In this case, Theorem \ref{rigidity_iso_profile} follows from Theorem  \ref{curvature_estimate} (ii). 

Next we consider the case $K\ne 0$.
By taking $s=\bar{u}(r,t)$ in (\ref{level_set_equa}),  $\bar{u}$ is the solution to
\begin{equation}\label{key_6}
	\int_{\Gamma_r} \frac{1}{|\nabla u(\cdot,t)|} d \sigma= \frac{\operatorname{Area_K}(\partial B^K(p_K,r))}{|\frac{d}{dr} \bar{u}(r,t)|},
\end{equation}
with  $\Gamma_r=\{x\in M \mid u(x,t)=\bar{u}(r,t) \}$. Notice that $u(x,t)=(4\pi t)^{-\frac{n}{4}}e^{-\frac{d(p,x)^2}{8t}}\eta\in \mathcal{B}^{\alpha}_p(V_p)$. 
Denote $g$ be the metric of $V$ and $g_K$ be the metric of space form $M^n_K$.
Now we rescale the metrics as $\tilde{g}=t^{-1}g$ and  $\tilde{g}_K=t^{-1}g_K$. 
Then (\ref{key_6})  becomes
\begin{equation}\label{key_6}
	\int_{\tilde{\Gamma_r}} \frac{1}{|\tilde{\nabla} (4\pi )^{-\frac{n}{4}}e^{-\frac{d_{\tilde{g}}(p,x)^2}{8}}\tilde{\eta}|} d \sigma_{\tilde{g}}= \frac{\operatorname{Area_{tK}}(\partial B^{tK}(p_K,r))}{|\frac{d}{dr} \tilde{u}(r,t)|},
\end{equation}
where $\tilde{u}(r,t)=t^{\frac{n}{4}}\bar{u}(\sqrt{t}r,t)$,  $\tilde{\Gamma_r}=\{x\in M \mid (4\pi )^{-\frac{n}{4}}e^{-\frac{d_{\tilde{g}}(p,x)^2}{8}}\tilde{\eta}=\tilde{u}(r,t)\}$, $\tilde{\eta}^2$ can be written as $\tilde{\eta}^2=1+\frac{1}{3}\operatorname{Rc}(g)(p)t\tilde{x^i}\tilde{x^j}
+e_{ijk}t^{\frac{3}{2}}\tilde{x^i}\tilde{x^j}\tilde{x^k}+b_{ijkl}t^2\tilde{x^i}\tilde{x^j}\tilde{x^k}\tilde{x^l}+o(t^2d_{\tilde{g}}^4)+\alpha t+q_it^{\frac{3}{2}}\tilde{x^i}+d_{ij}t^2\tilde{x^i}\tilde{x^j}+o(td_{\tilde{g}}^2)t+\beta t^2+o(t^2)
$, here  $\{\tilde{x}^k\}^n_{k=1}$ be the normal geodesic coordinates centered at $p$ on $T_pM$ with respect to metric $\tilde{g}$.  By taking $t\to 0$ in (\ref{key_6}), we can get
$\tilde{u}(r,0)=(4\pi )^{-\frac{n}{4}}e^{-\frac{r^2}{8}}$.
It is straightforward from the (\ref{key_6}) and the differentiability of $\tilde{\Gamma_r}$ and $\tilde{\eta}^2$ that  $ \bar{u}(r,t)=t^{-\frac{n}{4}}\bar{u}(\frac{r}{\sqrt{t}},t)=(4\pi t)^{-\frac{n}{4}}e^{-\frac{r^2}{8t}}\bar{\eta}(r,t)$ with rotational symmetric function $\bar{\eta}(x,t)$ can be written as $\bar{\eta}(x,t)^2=\sum\limits_{k=0}^2\bar{\phi}_k(x)t^k+o(t^2)$ around $(p_K,0)$ with $\bar{\phi}_2$ is
continuous at $p_K$, both 4-th derivatives of $\bar{\phi}_0$, 2-th derivatives of $\bar{\phi}_1$ exist at $p_K$. Actually,
we can also get the expansions of  $\bar{\eta}^2(x,t)$ from (\ref{key_6}) by the direct computations. However, we would like to
do this by an alternative easier way.

By letting  $t\to 0$ in (\ref{1.1_1}), we get $\bar{\eta}^2(p_K,0)=1$. Also notice that $\bar{u}(r,t) \doteqdot\bar{u}(y,t)$ is non-increasing in $r=\bar{d}(p_K,y)$ for any $t>0$. Then $\bar{u}(y,t)$ achieves its maximum at $p_K$ for any $t$ and hence $\bar{\nabla}\bar{u}(p_K,t)=0$.
By (\ref{expansion_L}) and comparing the $O(t)$ terms of (\ref{1.1_8}), we get $\operatorname{Sc}(p)\le n(n-1)K$. Moreover, we have $\operatorname{Sc}(p)=n(n-1)K$ if the assumptions (\ref{R_point_compare}) hold.
 Comparing the $O(t)$ terms of (\ref{1.1_1}) and (\ref{1.1_2}), by (\ref{term1}) and (\ref{888888}), we can get  $\frac{\partial}{\partial t}\bar{\eta}^2(p_K,0)=\frac{\partial}{\partial t}\eta^2(p,0)=\alpha$ and $tr(\bar{\nabla}\bar{\nabla} \bar{\eta}^2)(p_K,0)=tr(\nabla\nabla \eta^2)(p,0)=\frac{2}{3}\operatorname{Sc}(p)=\frac{2}{3}n(n-1)K$.  Hence $\bar{\nabla}\bar{\nabla} \bar{\eta}^2(p_K,0)=\frac{2}{3}(n-1)K\delta_{ij}$ since $\bar{\eta}$ is rotational symmetric. Then we get $\bar{u}\in \mathcal{B}^{\alpha}_p(M^n_K)$.
So Theorem \ref{key_key} follows from Lemma \ref{main_comparison} (i) and (\ref{1.1_8}).
\end{proof}

Now we give the Proof of Theorem \ref{rigidity_iso_profile}.

\begin{proof}[Proof of Theorem \ref{rigidity_iso_profile}]

	Notice that Theorem \ref{key_key} implies $\operatorname{Sc}(x)\le n(n-1)K$ for all $x\in V$.  Combining this with (\ref{scalar_curvature_lowerbound}), we get $\operatorname{Sc}\equiv n(n-1)K$ and hence  $\Delta \operatorname{Sc}\equiv 0$
	on $V$. Hence Theorem \ref{rigidity_iso_profile}  follows from Theorem \ref{key_key}.
\end{proof}


\begin{thebibliography}{99}
	

	
	\bibitem{BCL}
	Bakry, D., Concordet, D., and Ledoux, M. \textit{Optimal heat kernel bounds under logarithmic Sobolev inequalities}, 
	ESAIM Probab. Statist., 1,391-407, (1995/97).



	\bibitem{BM}

Brendle, S.,  Marcques, F. C. (2011).\textit{ Scalar curvature rigidity of geodesic balls in $ S^ n$.} Journal of Differential Geometry, 88(3), 379-394.	

	
	\bibitem{Brendle}

Brendle, S.
\textit{Rigidity phenomena involving scalar curvature}. Surveys in differential geometry. Vol. XVII, 179-202.
Surv. Differ. Geom., 17
	
	



	\bibitem{Cheng}
	Liang Cheng,
	\textit{On local rigidity theorems with respect to the scalar curvature}, 	arXiv:2310.05011
	
		\bibitem{RFV3}
	Chow, B.; Chu, S.; Glickenstein, D.; Guenther, C.; Isenberg, J.; Ivey, T.; Knopf, D.; Lu, P.; Luo, F.; Ni, L. \emph{The Ricci flow: techniques and applications. Part III. Geometric-Analytic Aspects}, Mathematical Surveys and Monographs,  vol.163, AMS, Providence, RI, 2010.
	
	

			\bibitem{G}
	
	A. Gray, \textit{Tubes, Second, Progress in Mathematics, vol. 221}, Birkhäuser Verlag, Basel, 2004. With
	a preface by Vicente Miquel.
	
			\bibitem{GV}
	A. Gray and L. Vanhecke, \emph{Riemannian geometry as determined by the volumes of small
		geodesic balls}, Acta Math. 142 (1979), 157-198.

	
	\bibitem{GL}
	Mikhael Gromov and H. Blaine Lawson, Jr., \textit{Spin and scalar curvature in the presence
		of a fundamental group}. I, Ann. of Math. (2) 111 (1980), no. 2, 209-230. MR 569070
	
	\bibitem{GL2}
	Mikhael Gromov and H. Blaine Lawson, Jr. \textit{Positive scalar curvature and the Dirac operator on complete Riemannian manifolds}, Inst. Hautes Etudes Sci. Publ. Math. (1983), no. 58, 83-196 (1984).


	
		\bibitem{Gromovlecture}
	Mikhael Gromov, \textit{Four lectures on scalar curvature},  In Perspectives in Scalar Curvature (pp. 1–100). World Scientific.
	

	
	

\bibitem{Gross}
	L. Gross,\textit{ Logarithmic Sobolev inequalities}, Amer. J. Math. 97 (1975)
	1061-1083.
	
	

	\bibitem{L2}
	M. Llarull, \textit{Sharp estimates and the Dirac operator}, Math. Ann. 310, 55-71, 1998
	
	\bibitem{MinOo}
M. Min-Oo, \textit{Scalar curvature rigidity of asymptotically hyperbolic spin manifolds}, Math. Ann. 285, 527-539 (1989)
	

	
	\bibitem{Ni2}
Lei Ni, \textit{The entropy formula for linear heat equation.} J. Geom. Anal. 14 (2004), no. 1, 87-100.	


	
	\bibitem{P1} G.Perelman, \emph{The entropy formula for the Ricci flow and its
		geometric applications.} http://arxiv.org/abs/math/0211159.
	

	\bibitem{ST}
 Shi, Yuguang and L.F. Tam, \textit{Positive mass theorem and the boundary behaviors of compact		manifolds with non-negative scalar curvature}, J. Diff. Geom. 62
	(2002)
	

	\bibitem{STY}
	R. Schoen and S.T. Yau, \textit{On the proof of the positive mass conjecture in general
		relativity}, Comm. Math. Phys. 65, 45-76 (1979)
	
	\bibitem{STY4}
	R. Schoen and S.T. Yau, \textit{Proof of the positive mass theorem}. II, Comm. Math. Phys. 79 (1981),
	231-260.

	\bibitem{STY2}	
	R. Schoen and S.T. Yau, \textit{On the structure of manifolds with positive scalar curvature},
	Manuscripta Math. 28, 159-183 (1979)
	
		\bibitem{STbook}
	Schoen, R.; Yau, S.-T. Lectures on differential geometry. Lecture notes prepared by Wei Yue Ding, Kung Ching Chang [Gong Qing Zhang], Jia Qing Zhong and Yi Chao Xu. Translated from the Chinese by Ding and S. Y. Cheng. Preface translated from the Chinese by Kaising Tso. Conference Proceedings and Lecture Notes in Geometry and Topology, I. International Press, Cambridge, MA, 1994.
	
		\bibitem{Topping}
	Topping, P. \textit{Lectures on the Ricci flow }(Vol. 325). Cambridge University Press. (2006)
	
	\bibitem{w1}
	Wang, Bing, \emph{The local entropy along Ricci flow Part A: the no-local-collapsing theorems.} Camb. J. Math. 6 (2018), no. 3, 267-346.
	
		\bibitem{w2}
	Wang, Bing, \emph{The local entropy along Ricci flow Part B: the pseudolocality theorems.} https://arxiv.org/abs/2010.09981v1
	
	
	\bibitem{W}
	E. Witten, \textit{A new proof of the positive energy theorem, Comm. Math. Phys}. 80, 381-
	402 (1981)
	
	

	\end{thebibliography}
\end{document}